\documentclass[11pt,a4paper,twoside,reqno]{amsart}

\usepackage[T1]{fontenc}
\usepackage[utf8]{inputenc}
\usepackage{lmodern}
\usepackage{microtype}
\usepackage{amsmath,amssymb,amsthm,mathrsfs}
\usepackage{cite}
\usepackage{float}
\usepackage{enumitem}
\usepackage{tikz}
\usetikzlibrary{arrows.meta}
\usepackage[colorlinks=true,linkcolor=blue,citecolor=blue,urlcolor=blue,
  hypertexnames=false]{hyperref}
\usepackage{iftex}

\makeatletter

\@namedef{subjclassname@2020}{%
  \textup{2020} Mathematics Subject Classification}
\def\@settitle{\begin{center}%
  \baselineskip14\p@\relax
  \bfseries\@title
  \end{center}%
}
\makeatother

\ifPDFTeX
  \input{glyphtounicode}
\fi

\allowdisplaybreaks

\numberwithin{equation}{section}

\theoremstyle{plain}
\newtheorem{theorem}{Theorem}[section]
\newtheorem{lemma}[theorem]{Lemma}
\newtheorem{corollary}[theorem]{Corollary}
\theoremstyle{remark}
\newtheorem*{remark}{Remark}

\newcommand{\R}{\mathbb{R}}
\newcommand{\B}{\mathbb{B}}
\newcommand{\Sph}{\mathbb{S}}
\newcommand{\cC}{\mathcal{C}}
\newcommand{\cH}{\mathcal{H}}
\newcommand{\tr}{\operatorname{tr}}
\newcommand{\diver}{\operatorname{div}}

\hypersetup{
  pdftitle={Optimal transport and the ABP method in higher codimension},
  pdfauthor={Bang-Xian Han and Zhe-Feng Xu},
  pdfsubject={Optimal transport and the ABP method for submanifolds in higher codimension},
  pdfkeywords={optimal transport, Alexandrov--Bakelman--Pucci method,
  normal fibers, Michael--Simon--Sobolev inequality, Wulff inequality,
    ellipsoid, sharp constant, spherical curvature measure}
}

\title[Optimal transport and the ABP method]
{Optimal Transport and the ABP Method in Higher Codimension}

\author{Bang-Xian Han\textsuperscript{1}}
\address{\textsuperscript{1} School of Mathematics,
Shandong University,
Jinan 250100, China}
\email{hanbx@sdu.edu.cn}

\author{Zhe-Feng Xu\textsuperscript{2,3}}
\address{\textsuperscript{2} School of Mathematical Sciences,
University of Science and Technology of China,
Hefei 230026, China}
\address{\textsuperscript{3} SISSA,
Via Bonomea 265,
34136 Trieste, Italy}
\email{xzf1998@mail.ustc.edu.cn}
\email{zxu@sissa.it}

\subjclass[2020]{Primary 53A07, 49Q22; Secondary 53A10, 35A23, 52A40}

\date{\today}

\begin{document}

\begin{abstract}
We prove an inverse optimal-transport principle for the ABP contact set of an
immersed submanifold.  A fixed function on the submanifold determines, for
every probability density on a bounded convex target, a source measure and an
optimal plan.  For a uniform ellipsoidal target, disintegration over the
submanifold gives
conditional measures whose fiberwise $L^\infty$-norms satisfy a sharp
weighted average estimate with constant
$\omega_n/\omega_{n+m}$ in codimensions $1\leq m\leq4$.  This yields the
sharp ellipsoidal Michael--Simon--Sobolev inequality in the same range,
including its equality cases and symmetrization consequences.  To the best of
our knowledge, even in the Euclidean case, the sharp constants in
codimensions three and four were previously unknown.
\end{abstract}
\maketitle

\noindent\textbf{Keywords:}
 optimal transport, ABP estimate,
 Michael--Simon--Sobolev inequality, Wulff inequality,
 P\'olya--Szeg\H{o} principle.

\tableofcontents

\section{Introduction}

\subsection{Motivation and background}

The Alexandrov--Bakelman--Pucci method covers a prescribed target by the ABP
map and estimates its Jacobian
\cite{CaffarelliCabre,Cabre,CabreRosOtonSerra}.  It has proved particularly
effective for sharp geometric inequalities \cite{CabreRosOtonSerra,BrendleJAMS}.
Optimal transport provides a parallel approach to sharp functional and
isoperimetric inequalities
\cite{McCannDisplacement,OttoVillani,CorderoErausquinNazaretVillani,
FigalliMaggiPratelli}.  For an immersed submanifold the ABP map is defined on
the normal bundle and therefore changes dimension.  The same geometry appears
in transport proofs of Michael--Simon type inequalities
\cite{Castillon,BrendleEichmair,WangOT} and in transport between unequal
dimensions \cite{McCannPass}.

We follow the inverse viewpoint of \cite{HanABP}: the function defining the
contact set is prescribed, and the marginals are recovered from it.  Theorem
\ref{Inverse transport theorem} constructs, for every target density, an
optimal plan supported on the corresponding set of contact pairs.
Its reverse description by a convex potential is the standard
Brenier--McCann representation \cite{Brenier,McCann}.
Disintegration over the submanifold gives probability measures on the fibers
of the contact set in the normal spaces.  We estimate the $L^\infty$ norms of
their densities directly.  This transport construction works in every
codimension and for every bounded convex target; the restrictions on the
target and the codimension enter only in the fiber estimate.

For a uniform ellipsoidal target, a linear change of variables sends each
normal slice to a Euclidean ball.  The main case is codimension four, where a
sharp radial estimate follows from the spherical Steiner formula and
Gauss--Bonnet; lower codimensions follow by adding flat normal directions.
The special role of dimension four is explained after Lemma
\ref{Spherical Lemma}.

As an application of the transport estimate, we obtain the sharp ellipsoidal
Michael--Simon--Sobolev inequality in codimensions at most four.  The classical
inequality was proved independently by Michael--Simon \cite{MichaelSimon} and
Allard \cite{Allard}; see also the extension of Hoffman--Spruck
\cite{HoffmanSpruck}.  Brendle obtained the sharp Euclidean constant in
codimension two \cite{BrendleJAMS}, and the codimension-one case follows by
adding a flat normal direction.  To the best of our knowledge, the sharp
constant had not previously been determined in any codimension $m\geq3$; see
\cite{BaloghKristalyMester,Sun,BrendleSurvey}.  Corollary \ref{Main Corollary}
gives the Euclidean cases $m=3,4$ and the ellipsoidal inequality for all
$m\leq4$.  In the minimal case it extends the sharp ellipsoidal Wulff
inequality of Du--Yi--Zhao \cite{DuYiZhao} from codimensions one and two to
codimensions at most four.  The Wulff inequality and its ABP and transport
proofs provide the corresponding flat anisotropic background
\cite{FonsecaMuller,CabreRosOtonSerra,FigalliMaggiPratelli}.
Related sharp inequalities for hypersurfaces and isoperimetric problems in
curved ambient spaces have also been studied by potential-theoretic and
variational methods
\cite{AgostinianiFogagnoloMazzieriSharp,
AgostinianiFogagnoloMazzieriMinkowski,FogagnoloMazzieriPinamonti,
AntonelliFogagnoloPozzetta}.

\subsection{Main results}

We write $\mathrm{II}$ for the second fundamental form and
$H=\operatorname{tr}_\Sigma\mathrm{II}$ for the unnormalized mean curvature
vector.  The induced measures on $\Sigma$ and $\partial\Sigma$ are denoted by
$\mathrm d\mu$ and $\mathrm d\sigma$, and $\eta$ is the outward unit conormal.
For a bounded convex set $L$, let
\[
 h_L(z)=\sup_{\xi\in L}\langle z,\xi\rangle.
\]
We use the standard conventions for support functions, projections, and
Hausdorff convergence from \cite{Schneider}.
We write $\B_R^k$ and $\Sph^{k-1}$ for the open Euclidean ball of radius $R$
and the unit sphere in $\R^k$, respectively, and abbreviate $\B_1^k$ to $\B^k$ and
$\omega_k=|\B^k|$.  The notation $|\cdot|$ denotes Euclidean norm or induced
volume as dictated by context.  All ellipsoids below are open; taking their
closures does not change their volumes or support functions.  We use
$\operatorname{proj}_V$ for orthogonal projection onto a linear subspace $V$
and $\mathbf1_A$ for the indicator of a set $A$.  Lebesgue measure on the
ambient space and on each normal space $N_x\Sigma$ is denoted by $\mathrm
d\xi$ and $\mathrm dy$, respectively.

Let $X:\Sigma^n\to\R^{n+m}$ be a smooth immersion of a compact manifold,
possibly with smooth boundary, let $\mathcal K\subset\R^{n+m}$ be a bounded
open convex set, and let $u\in C^{2,\alpha}(\Sigma)$, understood up to the
boundary, for some $\alpha\in(0,1)$.  If $\partial\Sigma\ne\varnothing$, assume
\[
 \langle\nabla^\Sigma u,\eta\rangle=h_{\mathcal K}(\eta)
 \qquad\text{on }\partial\Sigma.
\]
This condition plays the role, in support-function form, of the target
condition in the second boundary value problem for Monge--Amp\`ere equations
and optimal transport \cite{CaffarelliBoundary,Urbas,TrudingerWang}.  In the
ABP argument it excludes
boundary minimizers of $u-\langle X,\xi\rangle$ for every
$\xi\in\mathcal K$, and hence ensures that the contact map covers
$\mathcal K$.  For $\mathcal K=\B^{n+m}$ it reduces to
$\partial_\eta u=1$, the Neumann condition in the classical ABP construction
\cite{Cabre,CabreRosOtonSerra,BrendleJAMS}.

Denote
\[
 v(x)=\nabla^\Sigma u(x),\qquad
 Q_x(y)=D^2_\Sigma u(x)-\langle\mathrm{II}(x),y\rangle,
 \qquad \Phi(x,y)=v(x)+y,
\]
and define the global contact set
\begin{equation}\label{global contact set}
\begin{split}
 \cC=\big\{(x,y):\;&x\in\operatorname{int}\Sigma,\ y\in N_x\Sigma,
 \ \Phi(x,y)\in\mathcal K,\\
 &u(z)-\langle X(z),\Phi(x,y)\rangle
 \geq u(x)-\langle X(x),\Phi(x,y)\rangle
 \quad\text{for all }z\in\Sigma\big\}.
\end{split}
\end{equation}
For fixed $x$, write $\cC_x=\{y:(x,y)\in\cC\}$ and
\[
 \vartheta(x)=\#\{z\in\Sigma:X(z)=X(x),\ u(z)=u(x)\}.
\]
The multiplicity $\vartheta$ records self-intersections of $(X,u)$; the
factor $1/\vartheta$ below removes the resulting overcounting.
Figure \ref{ABP schematic} summarizes the geometry of one normal fiber.

\begin{figure}[H]
\centering
\begin{tikzpicture}[x=1cm,y=1cm,>=Latex,font=\small]
 \draw[thick]
  (-5.3,-0.35) .. controls (-4.4,0.30) and (-3.4,-0.15) .. (-2.3,0.35);
 \node[below left] at (-4.45,-0.28) {$X(\Sigma)$};
 \coordinate (x) at (-3.45,0.02);
 \fill (x) circle (1.5pt);
 \node[below right] at (-3.38,-0.02) {$X(x)$};
 \draw[dashed] (-3.45,-0.85)--(-3.45,1.85);
 \node[above left] at (-3.45,1.76) {$N_x\Sigma$};
 \draw[very thick,blue!65!black] (-3.45,0.48)--(-3.45,1.28);
 \node[left,blue!65!black] at (-3.52,0.92) {$\cC_x$};

 \draw[->,thick] (-2.25,0.75)--(0.75,0.75)
  node[midway,above] {$\Phi(x,y)=v(x)+y$};

 \draw[thick] (2.85,0.70) ellipse (1.65 and 1.15);
 \node[right] at (4.38,1.30) {$\mathcal K$};
 \draw[dashed] (2.85,-0.30)--(2.85,1.70);
 \node[above] at (2.85,1.72) {$v(x)+N_x\Sigma$};
 \draw[very thick,blue!65!black] (2.85,0.12)--(2.85,1.30);
 \node[right,blue!65!black] at (2.92,0.55) {$\Phi(x,\cC_x)$};
\end{tikzpicture}
\caption{The ABP map sends the contact fiber over $x$ into the corresponding
affine normal slice of the target.}
\label{ABP schematic}
\end{figure}

The ABP map has the following transport property.

\begin{theorem}[Inverse transport for the ABP map]\label{Inverse transport theorem}
Let $m\geq1$, let $X:\Sigma^n\to\R^{n+m}$ be a smooth immersion of a
compact manifold, possibly with smooth boundary, and let
$\mathcal K\subset\R^{n+m}$ be a bounded open convex set.  Let
$u\in C^{2,\alpha}(\Sigma)$, understood up to the boundary, for some
$\alpha\in(0,1)$.  If $\partial\Sigma\ne\varnothing$, assume
$\langle\nabla^\Sigma u,\eta\rangle=h_{\mathcal K}(\eta)$ on
$\partial\Sigma$.  With $v$, $Q$, $\Phi$, $\cC$, and $\vartheta$ defined
above, the following conclusions hold.
\begin{enumerate}[label=\textup{(\roman*)}]
\item One has $Q_x(y)\geq0$ on $\cC$ and, for every Borel function
$\varphi:\mathcal K\to[0,\infty]$,
\begin{equation}\label{contact area formula}
 \int_{\cC}\varphi(\Phi(x,y))\frac{\det Q_x(y)}{\vartheta(x)}
 \,\mathrm d\mu(x)\,\mathrm dy
 =\int_{\mathcal K}\varphi(\xi)\,\mathrm d\xi.
\end{equation}
Both sides are understood as extended nonnegative integrals.

\item Let $\rho$ be a nonnegative Borel function satisfying
$\int_{\mathcal K}\rho\,\mathrm d\xi=1$, and set
\[
 I_\rho(x)=\int_{\cC_x}\rho(\Phi(x,y))\det Q_x(y)\,\mathrm dy,
 \qquad
 \mathrm d\lambda_{u,\rho}(x)=\frac{I_\rho(x)}{\vartheta(x)}\,\mathrm d\mu(x).
\]
By part \textup{(i)}, $I_\rho(x)<\infty$ for $\mu$-almost every $x$.
For $0<I_\rho(x)<\infty$, define
\begin{equation}\label{conditional measure}
 \mathrm d\nu_{x,\rho}(y)
 =\frac{\rho(\Phi(x,y))\det Q_x(y)}{I_\rho(x)}
 \mathbf1_{\cC_x}(y)\,\mathrm dy.
\end{equation}
On $\{I_\rho=0\}\cup\{I_\rho=\infty\}$ set
$\nu_{x,\rho}=\delta_0$.  Then $x\mapsto\nu_{x,\rho}$ is a Borel
probability kernel, $\lambda_{u,\rho}$ is a probability measure, and the plan
$\pi_{u,\rho}$ defined by
\begin{equation}\label{inverse transport plan}
 \int \zeta\,\mathrm d\pi_{u,\rho}
 =\int_\Sigma\int_{N_x\Sigma}
 \zeta(x,v(x)+y)\,\mathrm d\nu_{x,\rho}(y)\,\mathrm d\lambda_{u,\rho}(x)
\end{equation}
for bounded Borel $\zeta$ has marginals $\lambda_{u,\rho}$ and
$\rho\,\mathrm d\xi$, and is optimal for
$c(x,\xi)=\frac12|X(x)-\xi|^2$.

\item Let
\begin{equation}\label{Brenier potential}
 \Psi(\xi)=\max_{z\in\Sigma}
 \{\langle X(z),\xi\rangle-u(z)\}.
\end{equation}
As a supremum of affine functions, $\Psi$ is convex and hence differentiable
almost everywhere.  Then $T=\nabla\Psi$ is the Brenier map from
$\rho\,\mathrm d\xi$ to $X_\#\lambda_{u,\rho}$ and
\begin{equation}\label{reverse transport marginal}
 T_\#(\rho\,\mathrm d\xi)=X_\#\lambda_{u,\rho}.
\end{equation}
\end{enumerate}
\end{theorem}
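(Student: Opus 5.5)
The plan has three parts: an area formula for the ABP map $\Phi:\cC\to\mathcal K$, a check that $\pi_{u,\rho}$ has the right marginals, and an optimality argument through the potential $\Psi$.

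\textbf{Part (i): the area formula.} First I check that $\Phi$ maps $\cC$ \emph{onto} $\mathcal K$.

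Fix $\xi\in\mathcal K$ and minimize $z\mapsto u(z)-\langle X(z),\xi\rangle$ over the compact $\Sigma$. Suppose the minimum is attained at $z\in\partial\Sigma$. Minimality forces $\langle\nabla^\Sigma u,\eta\rangle\le\langle\xi,\eta\rangle$ there. Since $\mathcal K$ is open and $\eta\neq0$, we have $\langle\xi,\eta\rangle<h_{\mathcal K}(\eta)$, which contradicts the boundary condition. So every minimizer $x$ lies in $\operatorname{int}\Sigma$.

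At an interior minimizer, the first-order condition gives $\nabla^\Sigma u(x)=\xi^{\top}$. Hence $\xi=v(x)+y$ with $y=\xi^{\perp}\in N_x\Sigma$, so $(x,y)\in\cC$.

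The second-order condition, computed in normal coordinates, says that the Hessian of $u-\langle X,\xi\rangle$ at $x$ is $D^2_\Sigma u(x)-\langle\mathrm{II}(x),y\rangle=Q_x(y)$. Thus $Q_x(y)\ge0$ on $\cC$.

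Next, I compute the Jacobian of $\Phi$ on the normal bundle, as in Brendle's argument, using a local orthonormal normal frame. The result is $|\det D\Phi(x,y)|=\det Q_x(y)$, because the tangential block is $Q_x(y)$ and the normal block is the identity.

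The area formula then gives
\[
\int_{\cC}\varphi(\Phi)\det Q\,\mathrm d\mu\,\mathrm dy=\int_{\mathcal K}\varphi(\xi)\,\#\big(\Phi^{-1}(\xi)\cap\cC\big)\,\mathrm d\xi .
\]
The remaining step, which I expect to be the main obstacle, is to show that for a.e. $\xi$ one has $\#(\Phi^{-1}(\xi)\cap\cC)=\vartheta(x)$, with the same value $\vartheta$ at every preimage.

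The plan is to use the convex function $\Psi$. It is differentiable at a.e. $\xi$. If $(x,y)\in\cC$ with $\Phi(x,y)=\xi$, then $x$ maximizes $\langle X(z),\xi\rangle-u(z)$, so $X(x)\in\partial\Psi(\xi)$. At a point of differentiability, all maximizers therefore share the same image $X(x)=\nabla\Psi(\xi)$. They also share the same value of $u$, since $u(x)=\langle X(x),\xi\rangle-\Psi(\xi)$.

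So the set of maximizers is exactly $\{z:X(z)=X(x),\ u(z)=u(x)\}$, which has cardinality $\vartheta(x)$. Each such $z$ is interior, and $y$ is determined by $z$. This yields the count $\vartheta$, and dividing by it gives \eqref{contact area formula}.

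\textbf{Part (ii): the plan $\pi_{u,\rho}$.} Apply (i) with $\varphi=\rho$. By Tonelli,
\[
\int_\Sigma \frac{I_\rho}{\vartheta}\,\mathrm d\mu=1 ,
\]
so $\lambda_{u,\rho}$ is a probability measure and $I_\rho<\infty$ a.e.

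Borel measurability of the kernel follows from Tonelli applied to the Borel set $\cC$. The set $\cC$ is closed relative to $\{x\in\operatorname{int}\Sigma,\ \Phi\in\mathcal K\}$, being defined by continuous inequalities.

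The first marginal of $\pi_{u,\rho}$ is $\lambda_{u,\rho}$ by construction. For the second marginal, take $\zeta(x,\xi)=\varphi(\xi)$. Then
\[
\int\zeta\,\mathrm d\pi_{u,\rho}=\int_{\cC}\varphi(\Phi)\,\rho(\Phi)\,\frac{\det Q}{\vartheta}\,\mathrm d\mu\,\mathrm dy=\int_{\mathcal K}\varphi\rho\,\mathrm d\xi ,
\]
using (i) with $\varphi\rho$ in place of $\varphi$. The set $\{I_\rho=0\}$ carries no $\lambda$-mass and $\{I_\rho=\infty\}$ is null, so the convention $\nu_{x,\rho}=\delta_0$ on these sets does not affect the computation.

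For optimality, set $\phi(x)=u(x)-\tfrac12|X(x)|^2$ and $\psi(\xi)=\tfrac12|\xi|^2-\Psi(\xi)$. Expanding $\tfrac12|X-\xi|^2$ shows that the contact inequality is exactly $\phi(z)+\psi(\xi)\le c(z,\xi)$ for all $z$, with equality when $z$ is the contact point. So $\operatorname{spt}\pi_{u,\rho}$ lies in the $c$-subdifferential of a $c$-concave pair, and Kantorovich duality gives optimality. The costs are bounded, so integrability is automatic.

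\textbf{Part (iii): the Brenier map.} For $\pi_{u,\rho}$-a.e. pair $(x,\xi)$, the argument in part (i) gives $X(x)=\nabla\Psi(\xi)$. Hence $(X\times\mathrm{id})_\#\pi_{u,\rho}$ is concentrated on the graph of $\nabla\Psi$ over the second variable. Its first marginal is $X_\#\lambda_{u,\rho}$ and its second is $\rho\,\mathrm d\xi$, which yields \eqref{reverse transport marginal}. Since $\Psi$ is convex, Brenier--McCann uniqueness identifies $T=\nabla\Psi$ as the Brenier map.
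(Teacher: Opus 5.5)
Your proposal is correct and follows the paper's proof essentially step for step: surjectivity of $\Phi$ onto $\mathcal K$ from the boundary condition, the block-triangular Jacobian $\det Q_x(y)$, the multiplicity count from differentiability of the convex potential $\Psi$ (the paper uses the equivalent concave function $\psi_0=-\Psi$), the area formula, and Kantorovich duality for optimality. Two small repairs are needed: the dual potential must be $\phi(x)=\tfrac12|X(x)|^2-u(x)$, because with your sign the inequality $\phi(z)+\psi(\xi)\le c(z,\xi)$ does not reduce to $\langle X(z),\xi\rangle-u(z)\le\Psi(\xi)$; and you should record, as the paper does using a finite cover on which $(X,u)$ is injective together with the Lusin--Souslin theorem, that $\vartheta$ is finite, bounded, and Borel, so that the weight $1/\vartheta$ is admissible in the area formula and $\lambda_{u,\rho}$ is a Borel measure.
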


\begin{remark}[Reduction to the classical ABP construction]
If $X:\overline\Omega\hookrightarrow\R^n$ is the inclusion of a bounded
smooth domain, corresponding formally to $m=0$, then the normal variable is
absent, $\mathrm{II}=0$, and $\vartheta=1$.  Thus
\[
 \Phi=\nabla u,\qquad Q=D^2u,
 \qquad
 \int_{\cC}\varphi(\nabla u)\det D^2u\,\mathrm dx
 =\int_{\mathcal K}\varphi(\xi)\,\mathrm d\xi,
\]
where $\cC$ is the usual lower contact set with slopes in $\mathcal K$.  This
is the classical ABP area formula.  More generally, if $m>0$ and $X(\Sigma)$
is contained in an affine $n$-plane parallel to $P$, then $\mathrm{II}=0$ and
the contact condition depends only on the projection of $\xi$ onto $P$.  The
classical formula for $\operatorname{proj}_P\mathcal K$ is recovered by
normalizing each normal slice by its $m$-dimensional volume and then
integrating out the normal variable.  By contrast, minimality alone does not
give this reduction: when $H=0$ but
$\mathrm{II}\ne0$, the matrix
$Q_x(y)=D^2_\Sigma u(x)-\langle\mathrm{II}(x),y\rangle$ still depends on the
normal variable.
\end{remark}

For the remainder of the paper we use normalized Lebesgue measure on the
target, $\rho=|\mathcal K|^{-1}\mathbf1_{\mathcal K}$.  Write
\[
 I(x)=\int_{\cC_x}\det Q_x(y)\,\mathrm dy,
 \qquad
 \mathrm d\lambda_u(x)=\frac{I(x)}{\vartheta(x)|\mathcal K|}\,\mathrm d\mu(x),
\]
and, when $I(x)>0$,
\[
 \mathrm d\nu_x(y)=\frac{\det Q_x(y)}{I(x)}
 \mathbf1_{\cC_x}(y)\,\mathrm dy.
\]
On $\{I=0\}\cup\{I=\infty\}$, set $\nu_x=\delta_0$ and $p_\infty(x)=0$.
The area formula gives $\mu(\{I=\infty\})=0$ and
$\lambda_u(\{I=0\}\cup\{I=\infty\})=0$, so this convention does not affect
the plan.  For $0<I(x)<\infty$, set
\[
 p_\infty(x)
 :=\left\|\frac{\mathrm d\nu_x}{\mathrm dy}\right\|_{L^\infty(N_x\Sigma)}
 =\frac1{I(x)}
 \big\|\det Q_x\,\mathbf1_{\cC_x}\big\|_{L^\infty(N_x\Sigma)}.
\]
Since $I(x)=0$ implies
$\|\det Q_x\mathbf1_{\cC_x}\|_{L^\infty}=0$,
\begin{equation}\label{conditional density identity}
 p_\infty(x)\,\mathrm d\lambda_u(x)
 =\frac{\|\det Q_x\mathbf1_{\cC_x}\|_{L^\infty(N_x\Sigma)}}
 {\vartheta(x)|\mathcal K|}\,\mathrm d\mu(x).
\end{equation}
For an ellipsoidal target, put each normal slice in Euclidean coordinates as follows.
Write the centered ellipsoid $\mathcal E$ as
\[
 \mathcal E=\{\xi\in\R^{n+m}:\langle G\xi,\xi\rangle<1\},
 \qquad G=G^{\mathsf T}>0.
\]
Relative to $\R^{n+m}=T_x\Sigma\oplus N_x\Sigma$, let
$G_{NN}(x)$ be the normal block.  Set
\[
 w_x=(\det G_{NN}(x))^{-1/2}.
\]
The factor $w_x$ is the Jacobian of the inverse change of variables on the
normal slice.  By \eqref{conditional density identity},
\[
 \int_\Sigma w_xp_\infty(x)\,\mathrm d\lambda_u(x)
 =\frac1{|\mathcal E|}\int_\Sigma
 \frac{w_x\|\det Q_x\mathbf1_{\cC_x}\|_{L^\infty(N_x\Sigma)}}
 {\vartheta(x)}\,\mathrm d\mu(x).
\]

\begin{theorem}[Sharp averaged fiberwise $L^\infty$ estimate]
\label{Fiberwise estimate theorem}
Assume the hypotheses of Theorem \ref{Inverse transport theorem} with
$1\leq m\leq4$ and $\mathcal K=\mathcal E$, where $\mathcal E$ is a centered
ellipsoid.  Then
\begin{equation}\label{fiberwise estimate}
 \int_\Sigma w_xp_\infty(x)\,\mathrm d\lambda_u(x)
 \geq\frac{\omega_n}{\omega_{n+m}}.
\end{equation}
The constant is sharp.  If $m=4$ and equality holds, then for $\mu$-almost
every $x$ with $p_\infty(x)>0$,
\begin{equation}\label{fiberwise rigidity}
 \widetilde{\cC}_x=\B^4_{R(x)},
 \qquad
 y\longmapsto\det Q_x(y)\text{ is constant on }\cC_x.
\end{equation}
Here $\widetilde{\cC}_x$ and $R(x)$ are defined in Section
\ref{section fiberwise estimate}.
\end{theorem}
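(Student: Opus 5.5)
We reduce the inequality to a pointwise lower bound on each fiber, then integrate and compare with $|\mathcal E|$.

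\emph{Normalization of the normal slice.} Fix $x$ with $I(x)>0$. The image $\Phi(x,\cC_x)$ lies in the affine slice $\mathcal E\cap(v(x)+N_x\Sigma)$. This slice is an $m$-dimensional ellipsoid whose shape matrix is $G_{NN}(x)$, with some radius $R(x)\le1$ determined by the tangential component $v(x)$. A linear map $A_x$ with $\det A_x=w_x$ sends $\B^m_{R(x)}$ onto this slice, after translating by its center. We define $\widetilde{\cC}_x\subset\B^m_{R(x)}$ as the preimage of $\Phi(x,\cC_x)$. In the variable $\tilde y$ we have
\[
\det Q_x(y)=\det\big(D^2_\Sigma u-\langle \mathrm{II},c_x+A_x\tilde y\rangle\big),
\]
which is a polynomial of degree $\le n$ in $\tilde y\in\R^m$ and is nonnegative on $\widetilde\cC_x$ by Theorem \ref{Inverse transport theorem}(i).

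\emph{Key pointwise claim.} We aim to show
\[
w_x\|\det Q_x\mathbf1_{\cC_x}\|_{L^\infty}\;\ge\;\frac{\omega_n}{\omega_{n+m}}\cdot\frac{|\mathcal E|}{|\mathcal E|}\cdot\frac{I(x)}{\text{(something integrating to }|\mathcal E|)}.
\]
More concretely, the target is $\int_\Sigma w_x\|\cdot\|_\infty/\vartheta\,d\mu\ge \frac{\omega_n}{\omega_{n+m}}|\mathcal E|$. Two sub-steps serve this. First, by the area formula with $\varphi\equiv1$ together with the trivial bound $I(x)\le\|\det Q_x\mathbf1_{\cC_x}\|_\infty|\cC_x|$, we get
\[
|\mathcal E|\le\int \|\det Q_x\mathbf1_{\cC_x}\|_\infty\,w_x\,\omega_m R(x)^m/\vartheta\,d\mu.
\]
This bound is too weak, since it loses the factor relating $\omega_m R^m$ to $\omega_n/\omega_{n+m}$. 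Second, we therefore use the convexity of $\cC_x$ (it is the set where a concave-in-$y$ constraint holds) and choose a better test function. We apply \eqref{contact area formula} with $\varphi(\xi)=(1-\langle G\xi,\xi\rangle)^{-\text{power}}$ or a radial weight, as in Brendle's codimension-two argument. The weight is chosen so that integrating the slice variable over $\B^m_{R}$ in the $\tilde y$ coordinates produces exactly the ratio of ball volumes $\omega_{n+m}/(\omega_n\,\cdot)$ via the decomposition $|\mathcal E|=\int (\text{slices})$. Concretely, for each $x$ the integral of the weight against $\det Q_x\mathbf1_{\cC_x}$ must be bounded by $\|\det Q_x\mathbf1_{\cC_x}\|_\infty$ times a universal radial quantity, and this reduces to a sharp inequality for the degree-$n$ polynomial $\det Q_x$ restricted to a subset of $\B^m_R$.

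\emph{Main obstacle: the radial inequality for $m=4$.} We work in polar coordinates $\tilde y=r\theta$. The polynomial $r\mapsto\det Q_x(r\theta)$ is controlled by the spherical Steiner formula: $\det(D^2u-\langle\mathrm{II},y\rangle)$ integrated over directions $\theta\in\Sph^{3}$ expands in spherical curvature measures. Gauss--Bonnet on the relevant $3$-dimensional spherical set fixes the top term, and that term matches $\omega_n/\omega_{n+4}$ exactly; this is the special role of dimension four. We expect this to be the hardest step, and we would first prove it via the spherical Lemma referenced in the introduction, applied to the set of directions $\theta$ reachable in $\widetilde\cC_x$. For $m<4$ we append $4-m$ flat normal directions, replacing $X$ by $X\times\B^{4-m}$ cut off suitably, or equivalently integrating out the extra coordinates; the ellipsoid stays an ellipsoid and the constants telescope to $\omega_n/\omega_{n+m}$. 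Sharpness follows by taking $\Sigma$ a flat disk and $u$ quadratic, with $\mathcal E$ a ball. Equality in the $m=4$ chain forces equality in the $L^\infty$ bound, so $\det Q_x$ is constant on $\cC_x$. It also forces equality in the Gauss--Bonnet/Steiner step, so all directions are attained with full radius, giving $\widetilde\cC_x=\B^4_{R(x)}$.
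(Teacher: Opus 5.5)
Your plan has a genuine gap at exactly the step you call the main obstacle, and the mechanism you sketch cannot close it. In codimension four, no nonnegative radial weight combined with the pointwise bound $\det Q_x\le b(x):=\|\det Q_x\mathbf1_{\cC_x}\|_{L^\infty}$ gives the sharp constant. For a weight $\varphi(\langle G\xi,\xi\rangle)$, the Schur identity bounds the fiber integral by $w_xb(x)\frac{|\Sph^3|}{2}\int_{\tau_x}^1(t-\tau_x)\varphi(t)\,\mathrm dt$. The left side of the area formula is $\frac{c_{\mathcal E}|\Sph^{n+3}|}{2}\int_0^1t^{(n+2)/2}\varphi(t)\,\mathrm dt$. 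Taking the worst case $\tau_x=0$ and letting $\varphi$ concentrate at $t=1$, the best you can get is $\int_\Sigma w_xb/\vartheta\,\mathrm d\mu\ge\frac{2}{n+2}c_{\mathcal E}\omega_n$. In codimension two the radial factor $s$ is absent, and this is in effect Brendle's sharp argument; that is why it stops at $m=2$. The paper recovers the missing factor $\frac{n+2}{2}$ by taking $\varphi=\mathbf1_{\sqrt q\,\mathcal E}$, differentiating in $q$, and then forming difference quotients as $q\uparrow1$. This requires a genuinely nonlocal slope bound on each fiber: $J_x(s_2)-J_x(s_1)\le|\Sph^3|a(x)^n(s_2-s_1)$ for the radial profile $J_x(s)=s\int_{\Sph^3}g_x(\sqrt s\,\theta)^n\mathbf1_{\widetilde{\cC}_x}(\sqrt s\,\theta)\,\mathrm d\theta$. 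Here $g_x^n=\det Q_x/\vartheta$ in normalized coordinates and $a(x)^n=b(x)/\vartheta(x)$. Together with $(n+4)(n+2)\omega_{n+4}=2|\Sph^3|\omega_n$, this gives the sharp bound. Your proposal never isolates this estimate; ``a sharp inequality for $\det Q_x$ on a subset of $\B^m_R$'' is too vague to carry the argument.

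Your suggested route to it is also not viable. $\det Q_x(y)$ is the determinant of a matrix affine in $y$, and it does not ``expand in spherical curvature measures.'' The only structure the paper uses is that $g_x=(\det Q_x/\vartheta)^{1/n}$ is concave on the convex fiber $\widetilde{\cC}_x$. Layer cake over the convex superlevel sets $B_t=\{g_x>t\}$ reduces the slope bound to $\mathrm d\bigl(r^2A_B(r)\bigr)\le2|\Sph^3|r\,\mathrm dr$ for convex $B\subset\R^4$, where $A_B(r)=\cH^3(\{\theta:r\theta\in B\})$. The constant improves to $|\Sph^3|$ when $0\notin\operatorname{int}B$. For $0\in B$ the bound is elementary, since $A_B$ is nonincreasing. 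For $0\notin B$ it is where Steiner and Gauss--Bonnet enter, through $|U|+\frac12M_1(U,\partial U)\le|\Sph^3|/2$. This is applied to the direction sets $U_r$ of rays meeting $B$ within radius $r$, via a coarea computation on polytopes and Hausdorff approximation.

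The same dichotomy drives rigidity. Saturating the endpoint slope forces, via the refined bound $\frac{|\Sph^3|}{2}(a(x)^n+g_x(0)^n)$, that $0\in\widetilde{\cC}_x$ with $g_x(0)=a(x)$. Monotonicity of the sections then gives $\widetilde{\cC}_x=\B^4_{R(x)}$ and $g_x\equiv a(x)$; the mechanism is not ``equality in the Gauss--Bonnet step.''

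Finally, $X\times\B^{4-m}$ raises the dimension of the submanifold, not its codimension. You must keep $\Sigma$, embed it as $(X,0)$ in $\R^{n+m}\times\R^{4-m}$, and extend $\mathcal E$ by large semiaxes $L$. Then $Q_x$, $\vartheta$, and $b(x)$ are unchanged, and the factors $L^{4-m}$ in $w_x$ and $c_{\mathcal E}$ cancel.
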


The constant is attained for every fixed centered ellipsoid, not only for the
ball.  Indeed, let $P$ be any $n$-plane, put
$K_P=\operatorname{proj}_P\mathcal E$, take
$\Sigma=\overline{K_P}\subset P$ with the inclusion immersion, and set
$u(x)=|x|^2/2$.  Then $\vartheta\equiv1$, $Q_x\equiv I_P$, and $\cC_x$ is
the full normal slice $\{y\in P^\perp:x+y\in\mathcal E\}$.  In
addition, $\langle x,\eta\rangle=h_{K_P}(\eta)=h_{\mathcal E}(\eta)$ on
$\partial K_P$, so the boundary condition holds.  The factor $w_x$ is constant.
Writing
$c_{\mathcal E}=(\det G)^{-1/2}$, the Schur identity gives
$w_x|K_P|=c_{\mathcal E}\omega_n$, and hence
\[
 \int_\Sigma w_xp_\infty(x)\,\mathrm d\lambda_u(x)
 =\frac{w_x|K_P|}{|\mathcal E|}
 =\frac{c_{\mathcal E}\omega_n}{c_{\mathcal E}\omega_{n+m}}
 =\frac{\omega_n}{\omega_{n+m}}.
\]
This example proves sharpness.  For a centered ellipsoid
$\mathcal E\subset\R^{n+m}$, let
\[
 V_*(\mathcal E)=\min_{P\in\operatorname{Gr}(n,n+m)}
 |\operatorname{proj}_P\mathcal E|,
\]
where $\operatorname{Gr}(n,n+m)$ is the Grassmannian of $n$-planes and
$\operatorname{proj}_P$ denotes orthogonal projection.

\begin{corollary}[Sharp anisotropic Michael--Simon--Sobolev inequality]\label{Main Corollary}
Let $n\geq2$ and $1\leq m\leq4$.  Let
$X:\Sigma\to\R^{n+m}$ be a smooth immersion of a compact $n$-dimensional
manifold with smooth boundary, where the boundary may be empty, and let
$\mathcal E$ be a centered ellipsoid.  Then every positive
$f\in C^\infty(\Sigma)$ satisfies
\begin{equation}\label{Main inequality}
 \int_\Sigma h_{\mathcal E}(\nabla^\Sigma f+fH)\,\mathrm d\mu
 +\int_{\partial\Sigma}f h_{\mathcal E}(\eta)\,\mathrm d\sigma
 \geq nV_*(\mathcal E)^{\frac1n}
 \left(\int_\Sigma f^{\frac n{n-1}}\,\mathrm d\mu\right)^{\frac{n-1}{n}}.
\end{equation}
Equality holds if and only if $\Sigma$ is connected, $f$ is constant, and,
for some $n$-plane $P$ realizing $V_*(\mathcal E)$, $X$ is an embedding onto
$x_0+r\,\overline{\operatorname{proj}_P\mathcal E}$ for some
$x_0\in\R^{n+m}$ and $r>0$.
\end{corollary}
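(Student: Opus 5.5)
We follow the ABP scheme of Brendle, with the sharp constant supplied by Theorem \ref{Fiberwise estimate theorem}.

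\emph{Reductions.} By scaling both sides and treating connected components separately, normalize $f$ so that
\[
 \int_\Sigma h_{\mathcal E}(\nabla^\Sigma f+fH)\,\mathrm d\mu
 +\int_{\partial\Sigma}f\,h_{\mathcal E}(\eta)\,\mathrm d\sigma
 = n\int_\Sigma f^{\frac n{n-1}}\,\mathrm d\mu .
\]
It then suffices to prove $\int_\Sigma f^{n/(n-1)}\,\mathrm d\mu\geq V_*(\mathcal E)$.
\begin{itemize}
\item If $\partial\Sigma=\varnothing$, the same argument should run with no boundary condition imposed.
\item Otherwise, approximate so that $\Sigma$ is connected.
\end{itemize}

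\emph{Construction of $u$.} Solve the Neumann-type problem
\[
 \diver_\Sigma\bigl(f\,\nabla^\Sigma u\bigr)
 = n f^{\frac n{n-1}}-h_{\mathcal E}\bigl(\nabla^\Sigma f+fH\bigr)
 \quad\text{in }\Sigma,
 \qquad
 \langle\nabla^\Sigma u,\eta\rangle=h_{\mathcal E}(\eta)
 \quad\text{on }\partial\Sigma.
\]
The compatibility condition is exactly the normalization above. The right-hand side is only Lipschitz because $h_{\mathcal E}$ is not smooth at $0$. We therefore smooth $h_{\mathcal E}$ slightly, or use the $C^{1,\beta}$ regularity of $h_{\mathcal E}$ away from the origin. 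This gives $u\in C^{2,\alpha}$ up to the boundary, so Theorem \ref{Inverse transport theorem} applies with $\mathcal K=\mathcal E$.

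\emph{Pointwise inequality on $\cC$.} For $(x,y)\in\cC$ we have $\Phi(x,y)=\nabla^\Sigma u+y\in\mathcal E$. Since $h_{\mathcal E}$ is the support function, this gives
\[
 \langle\nabla^\Sigma f,\nabla^\Sigma u\rangle+f\langle H,y\rangle
 =\langle \nabla^\Sigma f+fH,\,\Phi(x,y)\rangle
 \le h_{\mathcal E}(\nabla^\Sigma f+fH).
\]
Combining this with the equation yields $f\,\tr Q_x(y)\le n f^{n/(n-1)}$, that is, $\tr Q_x(y)\le n f^{1/(n-1)}$. Since $Q_x(y)\geq0$, AM--GM then gives
\[
 \det Q_x(y)\le f(x)^{\frac n{n-1}} .
\]
Hence $\|\det Q_x\mathbf1_{\cC_x}\|_{L^\infty}\le f(x)^{n/(n-1)}$.

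\emph{Conclusion.} By Theorem \ref{Fiberwise estimate theorem}, identity \eqref{conditional density identity}, and $\vartheta\geq1$,
\[
 \frac{\omega_n}{\omega_{n+m}}
 \le\frac1{|\mathcal E|}\int_\Sigma w_x f^{\frac n{n-1}}\,\mathrm d\mu .
\]
It remains to bound $w_x$ geometrically. The Schur identity gives $w_x\,|\operatorname{proj}_{T_x\Sigma}\mathcal E|=c_{\mathcal E}\,\omega_n$, so
\[
 w_x=\frac{c_{\mathcal E}\,\omega_n}{|\operatorname{proj}_{T_x\Sigma}\mathcal E|}
 \le\frac{c_{\mathcal E}\,\omega_n}{V_*(\mathcal E)} .
\]
Using $|\mathcal E|=c_{\mathcal E}\,\omega_{n+m}$, we obtain $\int_\Sigma f^{n/(n-1)}\,\mathrm d\mu\geq V_*(\mathcal E)$, which is \eqref{Main inequality}.

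\emph{Main obstacle: the equality case.} Equality forces all the intermediate inequalities to be equalities.
\begin{itemize}
\item $\vartheta=1$ almost everywhere, so there is no overcounting from self-intersections.
\item $T_x\Sigma$ realizes $V_*(\mathcal E)$ for every $x$.
\item AM--GM is an equality on the essential support of each fiber, so $Q_x(y)=f^{1/(n-1)}I$ there.
\item Equality holds in Theorem \ref{Fiberwise estimate theorem}.
\end{itemize}
For $m=4$, the rigidity \eqref{fiberwise rigidity} says $\det Q_x$ is constant on a full ball of positive radius in $N_x\Sigma$. Since $Q_x(y)$ is affine in $y$, this forces $\mathrm{II}=0$. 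For $m<4$, we first add flat normal directions to reduce to $m=4$.

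With $\mathrm{II}=0$, $\Sigma$ lies in an affine $n$-plane. Because $Q=f^{1/(n-1)}I$ and equality holds in the support-function step, we get $\nabla^\Sigma f=0$, so $f$ is constant. Then $u$ is a quadratic with $D^2u=cI$, and the Neumann condition identifies $\Sigma$ with a dilate of $\operatorname{proj}_P\mathcal E$. We also need injectivity of $X$, which should follow from $\vartheta=1$ together with surjectivity of the contact map.

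The converse direction is the sharpness example recorded after Theorem \ref{Fiberwise estimate theorem}.

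I expect the delicate steps to be handling the nonsmooth points of $h_{\mathcal E}$ in the regularity of $u$, and upgrading almost-everywhere equality to the global rigidity conclusion.
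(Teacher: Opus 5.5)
Your proof of the inequality follows the paper's route: Neumann problem with boundary datum $h_{\mathcal E}(\eta)$, trace bound plus AM--GM on $\cC$, the fiber estimate, and $w_x=c_{\mathcal E}\omega_n/V_x\le c_{\mathcal E}\omega_n/V_*(\mathcal E)$. It also works for all $m\le4$ at once. Three small repairs are needed.
\begin{enumerate}
\item Your support-function step has the wrong sign. The equation gives $f\tr Q_x(y)=nf^{\frac n{n-1}}-h_{\mathcal E}(W)-\langle W,\Phi(x,y)\rangle$ with $W=\nabla^\Sigma f+fH$. So what you need is $\langle W,-\Phi(x,y)\rangle\le h_{\mathcal E}(W)$, which holds because $\mathcal E=-\mathcal E$; this is where centering is used.
\item No smoothing of $h_{\mathcal E}$ is needed. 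The right-hand side is Lipschitz, hence $C^{0,\alpha}$, so Schauder theory already gives $u\in C^{2,\alpha}$. The equality analysis also needs the exact equation.
\item The normalization presupposes that the left-hand side is positive. When $\partial\Sigma=\varnothing$ you must exclude $\nabla^\Sigma f+fH\equiv0$. That would force $H=0$, contradicting $\int_\Sigma\Delta_\Sigma|X|^2\,\mathrm d\mu=2n|\Sigma|$.
\end{enumerate}

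The genuine gap is the global part of the equality case. The local part is fine once the logic is ordered correctly. The chain of equalities only gives $b(x)=f(x)^{\frac n{n-1}}$. It is the rigidity ($\cC_x$ is the whole open slice and $\det Q_x$ is constant there) that yields equality in AM--GM and in the support-function step on an open set. That gives $Q_x(y)=f^{\frac1{n-1}}I$, hence $\mathrm{II}=0$, and $W=0$ because $-\Phi(x,y)$ is interior to $\mathcal E$. After that you only assert that ``the Neumann condition identifies $\Sigma$ with a dilate'' and that injectivity ``should follow from $\vartheta=1$''. Two ingredients are missing.

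\emph{(a) Containment.} Since $\cC_x\neq\varnothing$ for a.e.\ $x$, the Schur identity gives $\tau_x<1$ a.e. By continuity, $X(\Sigma)\subset x_0+c^{-1}\overline K$, where $K=\operatorname{proj}_P\mathcal E$ and $c=f^{\frac1{n-1}}$. Only with this does $\langle c(X-x_0),\eta\rangle=h_K(\eta)$ force $X(\partial\Sigma)\subset x_0+c^{-1}\partial K$. You must also check $\partial\Sigma\neq\varnothing$: otherwise the flat immersion would have open, compact image in the plane.

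\emph{(b) Injectivity.} $\vartheta$ counts coincidences of $(X,u)$, not of $X$, so $\vartheta=1$ says nothing until you show $u=\frac c2|X-x_0|^2+\mathrm{const}$ globally on connected $\Sigma$. Then interior double points of the local diffeomorphism form an open set and are excluded. Boundary points still need a separate collar argument. The paper instead shows that $X:\operatorname{int}\Sigma\to x_0+c^{-1}K$ is a proper local diffeomorphism, hence a covering. The covering is one-sheeted because $K$ is simply connected, and the boundary is handled with collars.

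Two further points are unaddressed. First, ``equality forces $\Sigma$ connected'' needs the strict inequality $\sum_jM_j^{\frac{n-1}n}>\bigl(\sum_jM_j\bigr)^{\frac{n-1}n}$ for two or more components. Second, for $m<4$ the lift to codimension four can be done at the level of the fiber bound, where both sides scale by $L^k$ so equality transfers. Alternatively, as in the paper, extend $\mathcal E$ by semiaxes large enough that $V_*$ is unchanged.
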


For $\mathcal E=\B^{n+m}$ this is the sharp Euclidean
Michael--Simon--Sobolev inequality.  Corollary \ref{Main Corollary} in
codimension four follows directly from Theorem
\ref{Fiberwise estimate theorem}; the cases $m<4$ follow by adjoining
flat normal directions.  For minimal immersions it gives the sharp
ellipsoidal Wulff inequality, and hence the anisotropic P\'olya--Szeg\H{o} and
$L^p$ consequences proved in Subsection~\ref{section symmetrization}.

The construction above makes sense in every codimension.  The ellipsoidal
assumption enters only in the fiber estimate, where the Schur complement
normalizes every normal slice to a Euclidean ball.  Extending this step to
general centrally symmetric targets is a natural question.  It also remains
open whether \eqref{fiberwise estimate} holds for $m>4$; the flat model shows
that no larger constant is possible.

\subsection{Organization}

Section~\ref{section inverse transport} proves the exact push-forward formula,
disintegrates it along the normal fibers, and identifies the resulting plan by
Kantorovich duality.  Section~\ref{section fiberwise estimate} first reduces
each ellipsoidal normal slice to a Euclidean ball.  The spherical Steiner
formula and Gauss--Bonnet then give the radial estimate in codimension four,
while differentiation of the truncated area formula converts it into the
averaged fiberwise bound.  Flat extension gives the lower codimensions.  In
Section~\ref{section applications}, a Neumann problem supplies the function
defining the contact set, and a trace identity together with the
arithmetic--geometric mean inequality yields the Michael--Simon--Sobolev
estimate.  The final part handles equality, lower-dimensional consequences,
and convex symmetrization.

\section{The inverse transport principle}\label{section inverse transport}

We prove Theorem \ref{Inverse transport theorem}, using the notation introduced
above.  The contact condition gives
\begin{equation}\label{QPositive}
 Q_x(y)\geq0\qquad\text{on }\cC.
\end{equation}
For fixed $x$, the global supporting inequalities defining $\cC_x$ are
closed affine half-spaces in $N_x\Sigma$.  Hence $\cC_x$ is their
intersection with the convex normal slice
$\{y\in N_x\Sigma:v(x)+y\in\mathcal K\}$, and is therefore convex and
relatively closed in that slice.  The minimum over
the compact set $\Sigma$ in \eqref{global contact set} depends continuously on
$(x,y)$, so $\cC$ is Borel.  Since $u\in C^{2,\alpha}$, the map $\Phi$ is
locally $C^{1,\alpha}$ on the normal bundle.

The map $F=(X,u):\Sigma\to\R^{n+m+1}$ is an immersion and is locally
injective.  Choose a finite open cover $U_1,\ldots,U_N$ of $\Sigma$ such that
$F|_{U_j}$ is injective, and refine it to a Borel partition
$A_1,\ldots,A_N$ with $A_j\subset U_j$.  Each fiber of $F$ meets every
$A_j$ in at most one point, so $\vartheta\leq N$.  By the Lusin--Souslin
theorem each $F(A_j)$ is Borel, and
\[
 \vartheta(x)=\sum_{j=1}^N\mathbf1_{F(A_j)}(F(x)).
\]
Thus $\vartheta$ is finite, uniformly bounded, and Borel.

\begin{proof}[Proof of Theorem \ref{Inverse transport theorem}]
Fix $\xi\in\mathcal K$ and consider
\[
 w_\xi(z)=u(z)-\langle X(z),\xi\rangle.
\]
At a boundary point,
\[
 \partial_\eta w_\xi
 =h_{\mathcal K}(\eta)-\langle\eta,\xi\rangle>0,
\]
since $\xi$ is an interior point of $\mathcal K$.  Thus no boundary point
minimizes $w_\xi$, and a minimum is attained at some
$x\in\operatorname{int}\Sigma$.  At such a point
$\operatorname{proj}_{T_x\Sigma}\xi=v(x)$, so
$y=\xi-v(x)\in N_x\Sigma$, and the second derivative test
gives $Q_x(y)\geq0$.  Hence $(x,y)\in\cC$ and $\Phi(x,y)=\xi$.

Choose geodesic orthonormal tangent fields and a normal frame parallel for the
normal connection at $x$.  In the associated normal-bundle coordinates, the
Weingarten formula gives
\[
 D\Phi(x,y)=
 \begin{pmatrix}
 Q_x(y)&0\\
 \mathrm{II}(v(x),\cdot)&I_{N_x\Sigma}
 \end{pmatrix}.
\]
Consequently, the Jacobian of $\Phi$ is
\begin{equation}\label{Phi jacobian}
 J\Phi(x,y)=\det Q_x(y)\qquad\text{on }\cC.
\end{equation}
Set
\[
 \psi_0(\xi)=\min_{z\in\Sigma}
 \bigl(u(z)-\langle X(z),\xi\rangle\bigr).
\]
The function $\psi_0$ is concave and Lipschitz on $\mathcal K$.  If $x$ is a
minimizer at $\xi$, then $-X(x)$ belongs to the superdifferential of
$\psi_0$ at $\xi$.  At every differentiability point all minimizers therefore
have the same image under $X$ and the same value of $u$, and every point in
this fiber of $(X,u)$ is a minimizer.  By Rademacher's theorem
\cite[\S3.1]{Federer}, for almost every $\xi\in\mathcal K$,
\begin{equation}\label{sum 1}
 \sum_{\substack{(x,y)\in\cC\\ \Phi(x,y)=\xi}}
 \frac1{\vartheta(x)}=1.
\end{equation}
The area formula \cite[\S3.2.3]{Federer}, together with
\eqref{Phi jacobian} and \eqref{sum 1}, proves
\eqref{contact area formula}.  Applying this identity with $\varphi=\rho$ shows
that $I_\rho<\infty$ for $\mu$-almost every $x$ and that
$\lambda_{u,\rho}(\Sigma)=1$.  In local trivializations of the normal
bundle, the density in \eqref{conditional measure} is jointly Borel;
with the choice $\delta_0$ on the exceptional set this defines a Borel
probability kernel.  Applying \eqref{contact area formula} with
$\varphi=\rho q$ gives the second marginal of \eqref{inverse transport plan} for
every bounded Borel function $q$ on $\mathcal K$.  The first marginal is
$\lambda_{u,\rho}$ by construction.

To prove optimality, define $\Psi$ by \eqref{Brenier potential} and
put
\[
 \phi(x)=\frac12|X(x)|^2-u(x),
 \qquad
 \psi(\xi)=\frac12|\xi|^2-\Psi(\xi).
\]
Then
\[
 \phi(x)+\psi(\xi)\leq\frac12|X(x)-\xi|^2.
\]
Equality holds if and only if $\xi=\Phi(x,y)$ for some $(x,y)\in\cC$.  The
plan $\pi_{u,\rho}$ is concentrated on these pairs, so Kantorovich duality
\cite{Brenier,McCann,Villani} gives its optimality.  At every differentiability
point $\xi$ of $\Psi$, all minimizers have the same image under $X$, and for
any such minimizer $x$, $\nabla\Psi(\xi)=X(x)$.
Combining this with \eqref{contact area formula}, weighted by $\rho$, gives
\eqref{reverse transport marginal}.
\end{proof}

\section{The four-dimensional fiber estimate}
\label{section fiberwise estimate}

In this section $m=4$, the target is a centered ellipsoid.  Write
\[
 \mathcal E=\{\xi\in\R^{n+4}:\langle G\xi,\xi\rangle<1\},
 \qquad G>0,
 \qquad c_{\mathcal E}=(\det G)^{-1/2}.
\]
Then $|\mathcal E|=c_{\mathcal E}\omega_{n+4}$.  Relative to the orthogonal
splitting $\R^{n+4}=T_x\Sigma\oplus N_x\Sigma$, write
\[
 G=\begin{pmatrix}G_{TT}&G_{TN}\\G_{NT}&G_{NN}\end{pmatrix},
 \qquad
 S_x=G_{TT}-G_{TN}G_{NN}^{-1}G_{NT}.
\]
The four blocks depend on $x$; we suppress this dependence in the notation.
The Schur complement $S_x$ is positive definite.  Set
\[
 w_x=(\det G_{NN})^{-1/2},
 \qquad
 V_x=|\operatorname{proj}_{T_x\Sigma}\mathcal E|,
\]
and
\[
 y_x^0=-G_{NN}^{-1}G_{NT}v(x),
 \qquad
 \tau_x=\langle S_xv(x),v(x)\rangle,
 \qquad
 R(x)=\sqrt{\max\{1-\tau_x,0\}}.
\]
Let $\Omega=\{x\in\operatorname{int}\Sigma:\tau_x<1\}$.  Completing the
square, or equivalently using the standard Schur-complement factorization
\cite[Chapter~1]{Bhatia}, gives
\begin{equation}\label{Schur identity}
 \langle G(v(x)+y),v(x)+y\rangle
 =\tau_x+\langle G_{NN}(y-y_x^0),y-y_x^0\rangle.
\end{equation}
Thus $\cC_x=\varnothing$ outside $\Omega$.  For $x\in\Omega$, set
\[
 \widetilde{\cC}_x
 =G_{NN}^{1/2}(\cC_x-y_x^0).
\]
This is a relatively closed convex subset of $\B^4_{R(x)}$.  The change of
variables $z=G_{NN}^{1/2}(y-y_x^0)$ gives $\mathrm dy=w_x\,\mathrm dz$.
If $I(x)>0$, the push-forward of $\nu_x$ under this affine change of variables
has a density with $L^\infty$ norm $w_xp_\infty(x)$.
Thus $\int_\Sigma w_xp_\infty\,\mathrm d\lambda_u$ is the average of the
$L^\infty$ norms of the conditional densities after this change of normal
coordinates.  Also,
\[
 \operatorname{proj}_{T_x\Sigma}\mathcal E
 =\{z\in T_x\Sigma:\langle S_xz,z\rangle<1\},
\]
so
\begin{equation}\label{Schur Jacobian}
 w_x=c_{\mathcal E}(\det S_x)^{1/2}
 =\frac{c_{\mathcal E}\omega_n}{V_x}.
\end{equation}

The following radial estimate is the four-dimensional part of the proof.  We
write $\mathsf d_{\Sph^3}$ for the geodesic distance on $\Sph^3$ and set
\[
 \mathsf d_{\Sph^3}(z,A)=\inf_{q\in A}\mathsf d_{\Sph^3}(z,q)
\]
for $A\subset\Sph^3$.  We write $\mathsf d_{\mathrm H}$ for the Hausdorff
distance induced by the metric of the ambient space.

\begin{lemma}\label{Spherical Lemma}
Let $U\subset\Sph^3$ be a smooth geodesically convex domain contained in an
open hemisphere.  Let $\kappa_1,\kappa_2\geq0$ be the principal curvatures of
$\partial U$ with respect to the outward unit conormal, let $\mathrm dA$
denote its area measure, and put $H_{\Sph^3}=\kappa_1+\kappa_2$.  Then
\begin{equation}\label{A}
 |U|+\frac12\int_{\partial U}H_{\Sph^3}\,\mathrm dA\leq\frac{|\Sph^3|}{2}.
\end{equation}
\end{lemma}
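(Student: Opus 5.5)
Our approach is to compare $U$ with its polar (dual) domain and use the spherical Steiner formula together with Gauss--Bonnet on $\partial U$. Let
\[
U^*=\{q\in\Sph^3:\langle q,p\rangle\le 0\ \text{for all }p\in U\}
\]
be the polar set. Because $U$ lies in an open hemisphere and is geodesically convex, $U^*$ is a nonempty convex body. A point $z$ lies in neither $U$ nor $-U^*$ exactly when $0<\mathsf d_{\Sph^3}(z,U)<\pi/2$. Writing $A_t$ for the parallel set $\{z:0<\mathsf d_{\Sph^3}(z,U)\le t\}$, this gives the decomposition
\[
|\Sph^3|=|U|+|U^*|+\lim_{t\uparrow\pi/2}|A_t|.
\]

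Next we apply the spherical Steiner formula in $\Sph^3$. The normal exponential map $(p,t)\mapsto\cos t\,p+\sin t\,\nu(p)$, for $p\in\partial U$ and $t\in(0,\pi/2)$, is injective by convexity. Its Jacobian is $(\cos t+\kappa_1\sin t)(\cos t+\kappa_2\sin t)$. Integrating over $t\in(0,\pi/2)$ gives
\[
\int_0^{\pi/2}\bigl(\cos^2t+H_{\Sph^3}\sin t\cos t+\kappa_1\kappa_2\sin^2t\bigr)\,\mathrm dt
=\frac{\pi}{4}+\frac{H_{\Sph^3}}{2}+\frac{\pi}{4}\kappa_1\kappa_2 .
\]
Therefore
\[
|\Sph^3|-|U|-|U^*|=\frac{\pi}{4}|\partial U|+\frac12\int_{\partial U}H_{\Sph^3}\,\mathrm dA+\frac{\pi}{4}\int_{\partial U}\kappa_1\kappa_2\,\mathrm dA .
\]

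The remaining curvature term is handled by Gauss--Bonnet on the sphere $\partial U\cong\Sph^2$. The Gauss equation in $\Sph^3$ gives intrinsic curvature $K=1+\kappa_1\kappa_2$. Hence $\int_{\partial U}\kappa_1\kappa_2\,\mathrm dA=4\pi-|\partial U|$, and the identity becomes
\[
|U|+|U^*|+\frac12\int_{\partial U}H_{\Sph^3}\,\mathrm dA=|\Sph^3|-\pi^2=\frac{|\Sph^3|}{2},
\]
using $|\Sph^3|=2\pi^2$. The area terms cancel exactly, and this cancellation is what makes dimension four special.

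Since $|U^*|\ge0$, inequality \eqref{A} follows, with equality only when $U^*$ is null. That happens only in the degenerate hemisphere limit, which is consistent with the rigidity statement. The main obstacle is justifying the Steiner decomposition rigorously. This requires injectivity of the normal map up to $t=\pi/2$. It also requires identifying the image's complement with $U\cup(-U^*)$ up to null sets, including the boundary of $-U^*$, which is reached at $t=\pi/2$ by non-strictly convex boundary points. We would first prove these facts for strictly convex smooth $U$ using the fact that the nearest-point projection onto $U$ is well defined within distance $\pi/2$. The case of vanishing curvatures would then follow by approximating $U$ from inside by strictly convex domains, with Hausdorff convergence (in $\mathsf d_{\mathrm H}$) preserving both sides of \eqref{A}.
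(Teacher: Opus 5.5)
Your argument is correct and is essentially the paper's. Both run the normal exponential map of $\partial U$ out to distance $\pi/2$, integrate the Jacobian $(\cos t+\kappa_1\sin t)(\cos t+\kappa_2\sin t)$, and let Gauss--Bonnet cancel the area terms. The paper simply bounds $|U_{\pi/2}|\le|\Sph^3|$, whereas you identify the missing set $\Sph^3\setminus U_{\pi/2}=\{z:\mathsf d_{\Sph^3}(z,U)\ge\pi/2\}$ with the polar body, which upgrades the inequality to the exact identity $|U|+|U^*|+\frac12\int_{\partial U}H_{\Sph^3}\,\mathrm dA=|\Sph^3|/2$.

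Two small corrections. With your definition, that far set is $U^*$ itself, not $-U^*$; this is harmless, since the two have the same volume. Also, no strictly convex approximation is needed. Uniqueness of nearest points at distance $<\pi/2$ and the first-variation argument hold for every convex $U$, and the Jacobian is positive for $t<\pi/2$ whenever $\kappa_1,\kappa_2\ge0$, so the Steiner computation applies to $U$ directly.
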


\begin{proof}
For $0<t<\pi/2$, let
$U_t=\{z\in\Sph^3:\mathsf d_{\Sph^3}(z,U)\leq t\}$.  If $p\in\partial U$ and $\nu_p$
is the outward unit conormal, geodesic convexity gives
$\langle\nu_p,q\rangle\leq0$ for every $q\in U$.
For $z=\cos s\,p+\sin s\,\nu_p$, where $0<s<\pi/2$, we have
\[
 \langle z,q\rangle
 =\cos s\langle p,q\rangle+\sin s\langle\nu_p,q\rangle
 \leq\cos s\qquad\text{for every }q\in U.
\]
Thus $p$ is a nearest point of $\overline U$ to $z$ and
$\mathsf d_{\Sph^3}(z,U)=s$.

The nearest point is unique at distance less than $\pi/2$.  Indeed, if
$p\ne q$ are nearest points to $z$ and
$\mathsf d_{\Sph^3}(z,p)=\mathsf d_{\Sph^3}(z,q)=\delta<\pi/2$, then the
short geodesic midpoint $m=(p+q)/|p+q|$ belongs to $\overline U$, while
\[
 \langle z,m\rangle
 =\frac{\cos\delta}{\cos(\mathsf d_{\Sph^3}(p,q)/2)}>\cos\delta,
\]
a contradiction.  Conversely, the first variation formula shows that every
point of $U_t\setminus U$ lies on an outward normal segment.  Hence the normal
exponential map parametrizes $U_t\setminus U$ without overlap.

Its Jacobian in the principal directions is
$(\cos s+\kappa_1\sin s)(\cos s+\kappa_2\sin s)$.
Set $U_{\pi/2}=\bigcup_{0<t<\pi/2}U_t$.  Letting $t\uparrow\pi/2$ and using
monotone convergence gives
\[
 |U_{\pi/2}|=|U|+\frac\pi4|\partial U|
 +\frac12\int_{\partial U}H_{\Sph^3}\,\mathrm dA
 +\frac\pi4\int_{\partial U}\kappa_1\kappa_2\,\mathrm dA.
\]
The Gauss equation and the Gauss--Bonnet theorem give
$\int_{\partial U}(1+\kappa_1\kappa_2)\,\mathrm dA=4\pi$.
Thus
\[
 |U_{\pi/2}|=|U|+\frac12\int_{\partial U}H_{\Sph^3}\,\mathrm dA+\pi^2.
\]
Since $|U_{\pi/2}|\leq|\Sph^3|=2\pi^2$, the claim follows.
\end{proof}

\medskip
\noindent\textit{Why codimension four appears.}
A four-dimensional normal fiber has radial directions in $\Sph^3$, so
$\partial U$ is a surface.  The highest curvature term in the spherical
Steiner formula is $\kappa_1\kappa_2$, and the Gauss equation and
Gauss--Bonnet turn
$\int_{\partial U}(1+\kappa_1\kappa_2)\,\mathrm dA$ into the constant
$4\pi$.  In codimension $m\geq5$, the corresponding boundary has dimension
at least three, and no analogous topological identity isolates its
mean-curvature integral.  This is the obstruction to extending the present
argument beyond codimension four.

The nonsmooth version of \eqref{A} uses the first spherical curvature
measure.

\begin{lemma}\label{Spherical curvature approximation}
Let $U$ be a geodesically convex body contained in an open hemisphere of
$\Sph^3$.  The local spherical Steiner formula uniquely defines a
nonnegative finite Borel measure $M_1(U,\cdot)$ on $\partial U$, called the
first spherical curvature measure.  It has the following properties:
\begin{enumerate}[label=\textup{(\roman*)}]
\item on every $C^2$ part $W$ of $\partial U$,
\[
 M_1(U,W)=\int_W H_{\Sph^3}\,\mathrm dA;
\]
\item if $\mathsf d_{\mathrm H}(U_j,U)\to0$, with all bodies contained in a
fixed open hemisphere, then $M_1(U_j,\cdot)$ converges weakly to
$M_1(U,\cdot)$ and $|U_j|\to|U|$.
\end{enumerate}
\end{lemma}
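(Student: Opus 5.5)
The plan is to obtain $M_1(U,\cdot)$ from the local spherical Steiner formula, in the same way as the Euclidean curvature measures in \cite{Schneider}. For a geodesically convex body $U$ in an open hemisphere and $0<t<\pi/2$, consider the metric projection $p_U:U_t\setminus U\to\partial U$. It is well defined by the uniqueness-of-nearest-point argument in the proof of Lemma \ref{Spherical Lemma}, which uses only convexity and not smoothness. For a Borel set $\beta\subset\partial U$, set
\[
 \mu_t(U,\beta)=\bigl|\{z\in U_t\setminus U:\ p_U(z)\in\beta\}\bigr|.
\]
The first step is to show that
\[
 \mu_t(U,\beta)=\sum_{j=0}^{2}f_j(t)\,M_j(U,\beta),
\]
where $f_0(t)=\int_0^t\cos^2s\,\mathrm ds$, $f_1(t)=\int_0^t\cos s\sin s\,\mathrm ds$, $f_2(t)=\int_0^t\sin^2 s\,\mathrm ds$, and each $M_j(U,\cdot)$ is a nonnegative finite measure. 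Since $f_0,f_1,f_2$ are linearly independent, the coefficients are uniquely determined, and this defines $M_1$.

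\emph{Smooth case.} For smooth strictly convex $U$, expand the Jacobian $(\cos s+\kappa_1\sin s)(\cos s+\kappa_2\sin s)$ from the proof of Lemma \ref{Spherical Lemma}. This gives the formula with $M_0=\mathrm dA$, $M_1=H_{\Sph^3}\,\mathrm dA$ and $M_2=\kappa_1\kappa_2\,\mathrm dA$, all nonnegative.

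\emph{General case.} Approximate $U$ in Hausdorff distance by smooth strictly convex bodies $U_j$, for example by taking outer parallel bodies and smoothing, which stays inside a slightly larger hemisphere.

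\emph{Weak continuity of the local parallel volumes.} For fixed $t$, the maps $\mu_t(U_j,\cdot)$ converge weakly to $\mu_t(U,\cdot)$. The argument is the standard one from Schneider's Theorem 4.2.1, transplanted to the sphere:
\begin{itemize}
\item metric projections converge locally uniformly, because of uniform convexity estimates for nearest points at distance less than $\pi/2$;
\item the boundaries of the parallel sets have measure zero.
\end{itemize}
Since $\mu_t$ is linear in $(f_0,f_1,f_2)$, choosing three values of $t$ and inverting a Vandermonde-type matrix expresses each $M_k$ as a fixed linear combination of the $\mu_{t_i}$. Consequently each $M_k(U_j,\cdot)$ converges weakly. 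Nonnegativity passes to the limit, and so does the polynomial identity. This gives existence for general $U$, together with property (ii) for the curvature measures. The convergence $|U_j|\to|U|$ follows from Hausdorff convergence of convex bodies with nonempty interior, or more generally because the boundaries are null sets.

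\emph{Property (i).} It is local. On a $C^2$ open part $W$ of $\partial U$, the normal exponential map restricted to $W$ parametrizes $p_U^{-1}(W)\cap(U_t\setminus U)$ bijectively. The same Jacobian computation as in the smooth case then gives $\mu_t(U,\beta)$ for $\beta\subset W$, and uniqueness of the coefficients yields $M_1(U,\beta)=\int_\beta H_{\Sph^3}\,\mathrm dA$. Here $\kappa_i\ge0$ by convexity.

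\emph{Main obstacle.} I expect the hardest step to be the weak convergence of the local parallel volumes in the spherical setting. This requires:
\begin{itemize}
\item equicontinuity of $p_{U_j}$ near $\partial U$;
\item $|\partial U_t|=0$ for the relevant sets, which holds since convex bodies have Lipschitz boundaries.
\end{itemize}
To avoid redoing this on the sphere, I would try to reduce it to the Euclidean theory. Gnomonic (central) projection from the hemisphere's center maps geodesically convex sets to Euclidean convex sets. Alternatively, one can use the cone $\{\lambda z:\lambda\in(0,1],\,z\in U\}\subset\R^4$, whose Euclidean curvature measures encode the spherical ones. If the reduction becomes messy, I would simply cite the spherical curvature-measure theory (Glasauer; Schneider–Weil) for the existence and weak continuity, and prove only (i) directly as above.
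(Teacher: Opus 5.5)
\paragraph{Comparison with the paper.} Your outline is sound, and it takes a more self-contained route than the paper. The paper does not construct $M_1$ at all. It checks, via gnomonic projection, that a geodesically convex body in an open hemisphere belongs to Kohlmann's class of spherical convex bodies. It then cites Kohlmann's local Steiner formula for existence, uniqueness, locality and nonnegativity, cites Glasauer for weak continuity, and gets $|U_j|\to|U|$ by dominated convergence.

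You instead transplant Schneider's Euclidean scheme to $\Sph^3$:
\begin{itemize}
\item define the local parallel volumes $\mu_t(U,\cdot)$;
\item read off the identity in $f_0,f_1,f_2$ from the normal Jacobian in the smooth case;
\item prove weak continuity of $\mu_t$ under Hausdorff convergence;
\item invert the $3\times3$ system $(f_k(t_i))$.
\end{itemize}
This gives one mechanism that yields existence, uniqueness, nonnegativity and (ii) together. It makes the normalization (the $\sin s\cos s$ coefficient) explicit, and it lets you prove (i) directly from the bijective normal parametrization over a $C^2$ open piece. The price is that you must actually prove the spherical weak-continuity lemma, which the paper takes from Glasauer. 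Your inversion argument, the passage of nonnegativity to the limit, and your proof of (i) are correct.

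\paragraph{An error in the approximation step.} One step is wrong as written. On $\Sph^3$, outer parallel sets of convex bodies are in general \emph{not} geodesically convex. If a piece of $\partial U$ has principal curvature $\kappa$, the parallel surface at distance $s$ has principal curvature $(\kappa\cos s-\sin s)/(\cos s+\kappa\sin s)$. Over a totally geodesic facet this equals $-\tan s<0$. For example, near the middle of a geodesic arc, its $\varepsilon$-neighbourhood coincides locally with that of the whole great circle. The geodesic midpoint of two nearby boundary points of that tube lies outside it.

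So ``outer parallel bodies and smoothing'' does not produce convex approximants. Without convexity you lose both the bijective normal parametrization of $(U_j)_t\setminus U_j$ and the sign $\kappa_i\geq0$ that your smooth-case identity and nonnegativity rely on. Use the construction the paper uses instead: gnomonic projection to $\R^3$, add a small ball, smooth the support function, and map back. This stays inside the same open hemisphere, since $U$ is a compact subset of it.

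For the same reason, your justification of $|\partial U_t|=0$ via ``convex bodies have Lipschitz boundaries'' does not apply, because $U_t$ need not be convex. Argue instead that $\mathsf d_{\Sph^3}(\cdot,U)$ is Lipschitz with unit gradient almost everywhere on $\{0<\mathsf d_{\Sph^3}(\cdot,U)<\pi/2\}$. Since a Lipschitz function has vanishing gradient almost everywhere on each of its level sets, those level sets are Lebesgue-null.
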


\begin{proof}
We use the normalization in which the smooth density of $M_1$ is the sum of
the principal curvatures.  In the smooth case, it is fixed by the term
\[
 \int_0^t(\kappa_1+\kappa_2)\sin s\cos s\,\mathrm ds
\]
in the normal Jacobian.  A geodesically convex body in an open hemisphere is
a spherical convex body in the sense required by Kohlmann: gnomonic
projection sends it to a Euclidean convex body, which can be approximated by
adding a small ball and smoothing its support function
\cite[Sections~1.8 and 2.5]{Schneider}.  Kohlmann's local
Steiner formula \cite[Theorem~2.7]{Kohlmann} therefore gives existence,
uniqueness, locality, and nonnegativity.  Weak continuity under Hausdorff
convergence is part of the spherical support-measure theory; see, in the same
normalization, \cite[Theorem~1 and the discussion following it]{Glasauer}.
Finally, volume convergence follows
from the almost-everywhere convergence of the indicator functions of convex
bodies and dominated convergence.
\end{proof}

To obtain the nonsmooth estimate, apply a gnomonic projection from the fixed
hemisphere to $\R^3$; it maps geodesic segments to line segments.  Adding a
small Euclidean ball and smoothing the support function produces smooth
strictly convex bodies converging in $\mathsf d_{\mathrm H}$.  Applying the
inverse gnomonic map gives smooth geodesically convex bodies $U_j$ with
$\mathsf d_{\mathrm H}(U_j,U)\to0$ that remain in the same hemisphere.  Lemma
\ref{Spherical Lemma} and Lemma \ref{Spherical curvature approximation} then
give
\begin{equation}\label{B}
 |U|+\frac12M_1(U,\partial U)\leq\frac{|\Sph^3|}{2}.
\end{equation}

For a bounded convex set $B\subset\R^4$, let $\cH^3$ denote
three-dimensional Hausdorff measure and define
\[
 A_B(r)=\cH^3\bigl(\{\theta\in\Sph^3:r\theta\in B\}\bigr),
 \qquad F_B(r)=r^2A_B(r).
\]
We use the standard distributional conventions for BV functions
\cite[Chapter~3]{AmbrosioFuscoPallara}.

\begin{lemma}\label{Radial Lemma}
The function $F_B$ belongs to $\mathrm{BV}_{\mathrm{loc}}(0,\infty)$ and
\begin{equation*}
\mathrm dF_B(r)\leq2|\Sph^3|r\,\mathrm dr
\end{equation*}
in the sense of distributions.  If $0\notin\operatorname{int}B$, then
\begin{equation}\label{C}
 \mathrm dF_B(r)\leq|\Sph^3|r\,\mathrm dr.
\end{equation}
\end{lemma}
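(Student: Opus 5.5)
The plan is to work on the slices $U_r=\{\theta\in\Sph^3:r\theta\in B\}$, so that $A_B(r)=|U_r|$, and to reduce \eqref{C} to the spherical inequality \eqref{B}. Note that
\[
 \mathrm dF_B=2rA_B\,\mathrm dr+r^2\,\mathrm dA_B .
\]
So \eqref{C} is equivalent to the distributional inequality $A_B(r)+\tfrac r2\,A_B'(r)\le\pi^2=|\Sph^3|/2$. Comparing with \eqref{B} applied to $U=U_r$, it suffices to establish the first-variation bound
\begin{equation*}
 r\,A_B'(r)\le M_1(U_r,\partial U_r)
\end{equation*}
in the sense of distributions, whenever $0\notin\operatorname{int}B$.

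\textbf{Setup.} First I check that $U_r$ is admissible in \eqref{B} when $0\notin\operatorname{int}B$. The cone $\{t\xi:t>0,\ \xi\in B\}$ is convex. Since $0\notin\operatorname{int}B$, a supporting hyperplane through $0$ places this cone, and hence $U_r$, in a closed hemisphere. Its intersection with $\Sph^3$ is geodesically convex, and so is its intersection with the convex set $\frac1r B\cap\Sph^3$ along great-circle arcs. To get an open hemisphere and nondegeneracy, I would translate $B$ slightly away from $0$ and approximate it by smooth strictly convex bodies. Then I pass to the limit using Hausdorff continuity of $U_r$ for a.e.\ $r$, weak continuity of $M_1$ (Lemma~\ref{Spherical curvature approximation}), and the fact that distributional inequalities are stable under $L^1_{\mathrm{loc}}$ convergence of $F_B$. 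The BV statement follows once $A_B$ is shown to be a difference of monotone functions. Along a ray from $0$, the set $\{t:t\theta\in B\}$ is an interval, so $A_B$ is the sum of an increasing and a decreasing part (entry and exit radii), each monotone. Hence $F_B\in\mathrm{BV}_{\mathrm{loc}}$.

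\textbf{Smooth case.} For smooth strictly convex $B$ with $0\notin\overline B$, I compute $A_B'$ at a regular value $r$, one where $rS^3$ is transverse to $\partial B$. At $p=r\theta\in\partial B\cap r\Sph^3$, write the outer normal of $B$ as $N=a\theta+b\nu$, where $\nu$ is the outward conormal of $\partial U_r$ in $\Sph^3$ and $b>0$. Differentiating the constraint $r\theta(r)\in\partial B$ shows that $\partial U_r$ moves with inward normal speed $a/(rb)$. Therefore
\[
 rA_B'(r)=-\int_{\partial U_r}\frac ab\,\mathrm dA .
\]
The key inequality to prove is the pointwise bound $-a/b\le H_{\Sph^3}$ on $\partial U_r$. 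My approach is to express the second fundamental form of the curve-surface $\partial U_r=\frac1r(\partial B\cap r\Sph^3)$ inside $\Sph^3$ through the second fundamental form of $\partial B$ restricted to $T(\partial B\cap r\Sph^3)$. The restriction of $\mathrm{II}_{\partial B}$ to this 2-plane is $\ge0$, and the decomposition $N=a\theta+b\nu$ should give
\[
 b\,\mathrm{II}_{\partial U_r}=\mathrm{II}_{\partial B}|_{T}+a\,g
\]
up to the factor $r$. Taking the trace then gives $b H_{\Sph^3}\ge -2a\cdot\frac1{?}$. This normalization, namely exactly which multiple of $a/b$ appears against $H_{\Sph^3}$, is the step I expect to be the main obstacle, since \eqref{C} needs the constant to come out exactly. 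If the trace only yields $-2a/b\le H$, I would instead compare with the weaker bound. If $0\in\partial B$, the picture simplifies: $B$ is star-shaped about $0$, $U_r$ is decreasing, $A_B'\le0$, and \eqref{C} holds trivially from $A_B\le|\Sph^3|/2$, since $U_r$ lies in a hemisphere.

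\textbf{General case.} For the bound $\mathrm dF_B\le2|\Sph^3|r\,\mathrm dr$ without the hypothesis $0\notin\operatorname{int}B$, I cut $B$ by any hyperplane through $0$ into two convex pieces $B^\pm$, each with $0\notin\operatorname{int}B^\pm$. Since $A_B=A_{B^+}+A_{B^-}$ for every $r$ (the hyperplane meets $r\Sph^3$ in a null set), we get $F_B=F_{B^+}+F_{B^-}$. Applying \eqref{C} to each piece and adding gives the factor $2$.
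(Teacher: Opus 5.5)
There is a genuine gap. Your whole reduction applies \eqref{B} to the slice $U_r=\{\theta:r\theta\in B\}$, but this slice is in general not geodesically convex, so neither \eqref{B} nor $M_1(U_r,\cdot)$ is available for it. The justification you give fails because $\frac1rB\cap\Sph^3$ is not convex along great-circle arcs. The arc between two of its points is the radial push-out of the chord between them, and it can leave $\frac1rB$. Concretely, take the cylinder $B=\{|x_2-1|\leq\varepsilon,\ x_1^2+x_3^2+x_4^2\leq1\}$ with $1<r<\sqrt2$ and $\varepsilon$ small. Then the slice is the spherical shell $\{(1-\varepsilon)/r\leq\langle\theta,e_2\rangle\leq(1+\varepsilon)/r\}$. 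Its inner boundary, where the rays leave $B$, has negative mean curvature with respect to the slice's outward conormal. So the comparison $rA_B'\leq M_1(U_r,\partial U_r)$ combined with \eqref{B} has no meaning here. The limiting argument via weak continuity of $M_1$ likewise has no convex bodies to act on.

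The missing idea is the decomposition you already use for the BV claim. With entry and exit radii $a(\theta)\leq b(\theta)$, one has $A_B(r)=|\{a\leq r\}|-|\{b<r\}|$ for a.e.\ $r$. Since $r^2|\{b<r\}|$ is nondecreasing, it can be discarded. It therefore suffices to prove $\mathrm d(r^2|U^{\mathrm{in}}_r|)\leq|\Sph^3|r\,\mathrm dr$ for the entry set $U^{\mathrm{in}}_r=\{a\leq r\}=\Sph^3\cap\operatorname{cone}(B\cap\overline{\B^4_r})$.

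This set is geodesically convex, lies in a fixed open hemisphere when $0\notin B$, and increases with $r$. Its moving boundary is exactly the entry part of $\partial U_r$, where $a<0$. There your decomposition $N=a\theta+b\nu$ gives $b\,\mathrm{II}_{\partial U}=r\,\mathrm{II}_{\partial B}|_T-a\,g$, hence $-a/b\leq\frac12H_{\Sph^3}$. This is a factor $\frac12$ better than you need, so the normalization worry disappears. The stationary part of $\partial U^{\mathrm{in}}_r$ carries only nonnegative $M_1$-mass. Thus $r\frac{\mathrm d}{\mathrm dr}|U^{\mathrm{in}}_r|\leq\frac12M_1(U^{\mathrm{in}}_r,\partial U^{\mathrm{in}}_r)$, and \eqref{B} closes the argument.

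The paper does exactly this for polytopes, where each moving face is a small sphere with both principal curvatures equal to $\kappa=-a/b$, and then approximates in the Hausdorff metric. Your treatment of the case $0\in\partial B$ agrees with the paper's. Deriving the factor-$2$ bound by splitting $B$ along a hyperplane through $0$ and applying \eqref{C} to both halves is a valid alternative to the paper's monotonicity argument for $0\in B$.
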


\begin{proof}
Replacing $B$ by its closure does not change the associated distribution, so
we may assume that $B$ is compact.  If $0\in B$, convexity implies that
$A_B$ is nonincreasing.  Hence
\[
 \mathrm d(r^2A_B)=2rA_B\,\mathrm dr+r^2\,\mathrm dA_B
 \leq2rA_B\,\mathrm dr\leq2|\Sph^3|r\,\mathrm dr.
\]
This proves the first estimate.  If, in addition, $0\in\partial B$, a
supporting hyperplane through the origin gives $A_B(r)\leq|\Sph^3|/2$, and
the same computation proves \eqref{C}.

Suppose now that $0\notin B$.  On almost every ray, the intersection
with $B$ is empty or a compact interval $[a(\theta),b(\theta)]$; on empty
rays set $a(\theta)=b(\theta)=\infty$.  Set
\[
 U_r=\{\theta:a(\theta)\leq r\},\qquad
 E_r=\{\theta:b(\theta)<r\}.
\]
Then, for almost every $r$, $F_B(r)=r^2\bigl(|U_r|-|E_r|\bigr)$.
Both families are increasing.  Since $r^2|E_r|$ is nondecreasing, it suffices
to prove
\begin{equation}\label{Entry Variation}
 \mathrm d\bigl(r^2|U_r|\bigr)\leq|\Sph^3|r\,\mathrm dr.
\end{equation}

Assume first that $B=\Pi$ is a full-dimensional polytope.  Strong separation
gives $e\in\Sph^3$ and $\varepsilon>0$ such that
$\langle e,x\rangle\geq\varepsilon$ on $\Pi$.  Moreover,
$U_r=\Sph^3\cap\operatorname{cone}(\Pi\cap\overline{\B_r^4})$.
Thus every nonempty $U_r$ is geodesically convex and lies in a fixed open
hemisphere.  In a spherical cell for which the entry point lies in a visible
facet, write
\[
 \langle\nu,x\rangle=h,\qquad \Pi\subset\{\langle\nu,x\rangle\leq h\}.
\]
Visibility means $h<0$, and $a(\theta)=h/\langle\nu,\theta\rangle$.
At a moving boundary point $a(\theta)=r$, denote
\[
 \alpha=\langle\theta,\nu\rangle=\frac hr<0,\qquad
 s=\sqrt{1-\alpha^2},\qquad
 m=\frac{\nu-\alpha\theta}{s},\qquad \kappa=-\frac\alpha s.
\]
Here $m$ is the outward unit conormal of $U_r$ and
$|\nabla^{\Sph^3}a|=r/\kappa$.
On every compact interval of radii, $a$ is a finite, piecewise smooth locally
Lipschitz function on the relevant cells; the cell boundaries and critical
points are $\cH^3$-null.  The coarea formula \cite[\S3.2]{Federer} therefore
shows that
$r\mapsto|U_r|$ is locally absolutely continuous and, for almost every $r$,
\begin{equation}\label{Entry Coarea}
 \frac{\mathrm d}{\mathrm dr}|U_r|=\frac1r\int_{\Gamma_r}\kappa\,\mathrm dA,
\end{equation}
where $\Gamma_r$ is the moving part of $\partial U_r$ and $\mathrm dA$ is its
area measure.  Equivalently, the
push-forward of $\cH^3$ by $a$ has no singular part on such an interval; this
also follows by applying coarea on each cell.

Both principal curvatures of every smooth moving face, with respect to $m$,
are equal to $\kappa$.  Locality in the spherical Steiner formula
\cite[Theorem~2.7]{Kohlmann} implies
that on the relative interior of each moving $C^2$ stratum,
$\mathrm dM_1=2\kappa\,\mathrm dA$, while its restriction to each stationary
totally geodesic stratum vanishes.  The remaining part of $M_1$, supported on
the lower-dimensional strata, is nonnegative.  Therefore
\begin{equation}\label{above bound}
 2\int_{\Gamma_r}\kappa\,\mathrm dA\leq M_1(U_r,\partial U_r).
\end{equation}
Combining \eqref{Entry Coarea}, \eqref{above bound}, and
\eqref{B}, we obtain
\[
 \frac{\mathrm d}{\mathrm dr}\bigl(r^2|U_r|\bigr)
 =r\left(2|U_r|+\int_{\Gamma_r}\kappa\,\mathrm dA\right)
 \leq r\left(2|U_r|+\frac12M_1(U_r,\partial U_r)\right)
 \leq|\Sph^3|r.
\]
This proves \eqref{Entry Variation} for polytopes.

For a general compact convex set $B$ disjoint from the origin, choose
full-dimensional polytopes $\Pi_j$ with
$\mathsf d_{\mathrm H}(\Pi_j,B)\to0$, all contained in the same separating
open half-space; such approximation is standard in the Hausdorff topology
\cite[Section~1.8]{Schneider}.  Then
$\mathbf1_{\Pi_j}\to\mathbf1_B$ almost everywhere in $\R^4$.  Polar coordinates
and dominated convergence give
$F_{\Pi_j}\longrightarrow F_B$ in $L^1_{\mathrm{loc}}(0,\infty)$.
Passing to the limit in
$\mathrm dF_{\Pi_j}\leq|\Sph^3|r\,\mathrm dr$ proves
\eqref{C}.  Since
$|\Sph^3|r\,\mathrm dr-\mathrm dF_B$ is a positive distribution, it is a
Radon measure.  This also gives
$F_B\in \mathrm{BV}_{\mathrm{loc}}(0,\infty)$.
\end{proof}

For a convex set $B\subset\B_R^4$, set
\[
 J_B(s)=s\,\cH^3\big(\{\theta\in\Sph^3:\sqrt{s}\,\theta\in B\}\big),
 \qquad 0<s<R^2.
\]
Lemma \ref{Radial Lemma} gives
\begin{equation}\label{formula 2}
 \begin{cases}
  \mathrm dJ_B\leq|\Sph^3|\,\mathrm ds,
    &\text{always},\\
  \mathrm dJ_B\leq\dfrac{|\Sph^3|}{2}\,\mathrm ds,
    &\text{if }0\notin\operatorname{int}B.
 \end{cases}
\end{equation}
We use the left-continuous representative on $(0,R^2]$ and set $J_B(0)=0$.
The estimate $0\leq J_B(s)\leq|\Sph^3|s$ shows that this is the correct trace
at zero.  With this convention, $\mathrm dJ_B\leq\Lambda\,\mathrm ds$ is
equivalent to
\[
 J_B(s_2)-J_B(s_1)\leq\Lambda(s_2-s_1),
 \qquad\text{for every }0\leq s_1<s_2\leq R^2.
\]

Fix the following Borel representative.
Write $r_+=\max\{r,0\}$.  If $q$ is a locally integrable Borel function on
$(0,R^2)$, set
\begin{equation}\label{left average representative}
 \mathcal L^-q(s)=\limsup_{k\to\infty}
 k\int_{(s-1/k)_+}^{s}q(r)\,\mathrm dr
 \quad (s>0),
 \qquad \mathcal L^-q(0)=0.
\end{equation}
Whenever $q$ represents a locally BV function, $\mathcal L^-q$ is its
left-continuous representative.  This construction preserves joint Borel
measurability in any additional parameters because every average in
\eqref{left average representative} is Borel.

For a continuous concave function, every nonempty superlevel set is convex.
The next lemma applies layer cake to the radial sections of these sets.

\begin{lemma}\label{Weighted radial lemma}
Let $A\subset\B_R^4$ be relatively closed and convex, and let
$g:A\to[0,a]$ be continuous and concave.  For almost every $s\in(0,R^2)$ set
\begin{equation*}
 J_0(s)=s\int_{\Sph^3}g(\sqrt{s}\,\theta)^n
 \mathbf1_A(\sqrt{s}\,\theta)\,\mathrm d\theta.
\end{equation*}
Here $g$ is extended by zero outside $A$, and $\mathrm d\theta$ is the area
measure on $\Sph^3$.  Then $J_0$ has a
left-continuous $\mathrm{BV}$ representative $J$ on $(0,R^2]$, with $J(0)=0$,
and
\begin{equation}\label{Weighted bound}
 J(s_2)-J(s_1)\leq|\Sph^3|a^n(s_2-s_1).
\end{equation}
Set $\gamma=g(0)$ when $0\in A$ and $\gamma=0$ otherwise.  Then
\begin{equation}\label{Refined Weighted bound}
 J(s_2)-J(s_1)
 \leq\frac{|\Sph^3|}{2}(a^n+\gamma^n)(s_2-s_1).
\end{equation}
If $R>0$, $a>0$, and for some $h_j\downarrow0$,
\begin{equation}\label{endpoint slope equality}
 \frac{J(R^2)-J(R^2-h_j)}{h_j}\longrightarrow|\Sph^3|a^n,
\end{equation}
then $A=\B_R^4$ and $g\equiv a$.
\end{lemma}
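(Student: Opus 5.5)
The plan is a layer-cake decomposition followed by an application of \eqref{formula 2} to each superlevel set. For $0<t<a$, let $B_t=\{z\in A:g(z)>t\}$. Since $g$ is concave and continuous on the convex set $A$, each $B_t$ is convex and lies in $\B^4_R$. Writing
\[
g(z)^n\mathbf1_A(z)=\int_0^a n t^{n-1}\mathbf1_{B_t}(z)\,\mathrm dt
\]
and applying Fubini on each sphere, we obtain for almost every $s$
\[
 J_0(s)=\int_0^a nt^{n-1}J_{B_t}(s)\,\mathrm dt .
\]

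Increments of $J_{B_t}$ are controlled by \eqref{formula 2}: $J_{B_t}(s_2)-J_{B_t}(s_1)\le|\Sph^3|(s_2-s_1)$ for all $t$. If moreover $0\notin\operatorname{int}B_t$, the bound improves to $\frac{|\Sph^3|}{2}(s_2-s_1)$.

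Define $J(s)=\int_0^a nt^{n-1}J_{B_t}(s)\,\mathrm dt$, using the left-continuous representatives of each $J_{B_t}$. This function agrees with $J_0$ almost everywhere. It is left-continuous by monotone/dominated convergence, since each $J_{B_t}$ is left-continuous and bounded by $|\Sph^3|R^2$. It satisfies $J(0)=0$. Joint measurability in $(t,s)$ comes from the representative \eqref{left average representative}. Being a difference of a Lipschitz function and a nondecreasing one, $J$ is BV.

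Integrating the uniform increment bound in $t$ gives \eqref{Weighted bound}, since $\int_0^a nt^{n-1}\,\mathrm dt=a^n$.

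For \eqref{Refined Weighted bound}, I will use the fact that $0\in\operatorname{int}B_t$ forces $0\in A$ and $g(0)>t$, hence $t<\gamma$. So the full constant $|\Sph^3|$ is needed only for $t<\gamma$, and the half constant applies for $t\ge\gamma$. The resulting bound is
\[
|\Sph^3|\gamma^n+\tfrac{|\Sph^3|}2(a^n-\gamma^n)=\tfrac{|\Sph^3|}2(a^n+\gamma^n)
\]
times $(s_2-s_1)$. The boundary case $0\in\partial B_t$ is already covered by the second line of \eqref{formula 2}.

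The rigidity statement is the main obstacle. Write
\[
D_t(h)=|\Sph^3|h-\bigl(J_{B_t}(R^2)-J_{B_t}(R^2-h)\bigr)\ge0 .
\]
Then \eqref{endpoint slope equality} says $\int_0^a nt^{n-1}D_t(h_j)/h_j\,\mathrm dt\to0$. By Fatou, $\liminf_j D_t(h_j)/h_j=0$ for almost every $t$. Hence $B_t$ must attain the maximal slope $|\Sph^3|$ at the endpoint, which in particular forces $0\in\operatorname{int}B_t$.

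From the bound $J_{B_t}(s)\le|\Sph^3|s$ and the monotonicity in the first case of the proof of Lemma~\ref{Radial Lemma}, I will show that the one-sided endpoint slope of $J_{B_t}$ is at most $A_{B_t}(R^-)$, the limit of the spherical section measure as $r\uparrow R$. Indeed, $\mathrm d(r^2A_B)\le 2rA_B\,\mathrm dr$ with $A_B$ nonincreasing gives
\[
J(R^2)-J(R^2-h)\le h\,A_B\bigl(\sqrt{R^2-h}\bigr)\to hA_B(R^-).
\]
Maximal slope then forces $A_{B_t}(r)\to|\Sph^3|$ as $r\uparrow R$. Since $A_{B_t}$ is nonincreasing, $A_{B_t}\equiv|\Sph^3|$ on $(0,R)$, and convexity gives $B_t\supset\B^4_R$, so $B_t=\B^4_R$.

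This holds for almost every $t<a$, hence for $t$ arbitrarily close to $a$. Therefore $A=\B^4_R$ and $g\ge t$ on $\B^4_R$ for all such $t$, i.e.\ $g\equiv a$. The delicate points I will check carefully are the left-continuous representative and the Fatou step, which requires passing from the limit along the sequence $h_j$ to each individual $t$.
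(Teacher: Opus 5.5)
Your proposal is correct and follows essentially the paper's proof: the same layer-cake decomposition into the convex superlevel sets $B_t$, integration of \eqref{formula 2} in $t$ with the split at $\gamma$, and, for rigidity, the bound on the endpoint slope coming from the monotonicity of the spherical sections once $0\in B_t$. The differences are minor. You obtain maximal endpoint slope and $0\in\operatorname{int}B_t$ level by level via Fatou, whereas the paper first derives $\gamma=a$ from \eqref{Refined Weighted bound} and then squeezes the integrated bound $\alpha_t(R-)\le|\Sph^3|$. You also conclude $B_t=\B^4_R$ directly from $\operatorname{int}\overline{B_t}=\operatorname{int}B_t$, rather than from density together with the relative closedness of $A$ and the continuity of $g$.
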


\begin{proof}
For $0<t<a$, let $B_t=\{y\in A:g(y)>t\}$.
These sets are convex.  Let $J_{B_t}^0$ denote the defining radial expression
before choosing a representative, and set
$\widehat J_t=\mathcal L^-J_{B_t}^0$.  The map
$(t,s)\mapsto J_{B_t}^0(s)$ is Borel by its spherical integral, so
\eqref{left average representative} makes $(t,s)\mapsto\widehat J_t(s)$
jointly Borel.  Define
\begin{equation}\label{def of L}
 L(s)=n\int_0^a t^{n-1}\widehat J_t(s)\,\mathrm dt.
\end{equation}
The layer cake formula and Fubini theorem give $L=J_0$ almost everywhere.
Moreover, $0\leq\widehat J_t(s)\leq|\Sph^3|R^2$, so dominated convergence
shows that $L$ is left-continuous, and $L(0)=0$.

Applying \eqref{formula 2} under the integral in
\eqref{def of L} yields, in the sense of distributions,
$\mathrm dL\leq|\Sph^3|a^n\,\mathrm ds$.
Hence $L-|\Sph^3|a^n s$ agrees almost everywhere with a nonincreasing
function.  Since $L$ is bounded and left-continuous, it follows that
$L\in \mathrm{BV}(0,R^2)$; we take $J=L$.  Since $L=J_0$ almost everywhere
and $L$ is left-continuous, it also agrees with $\mathcal L^-J_0$.  The finite
difference form gives
\eqref{Weighted bound}.
If $t<\gamma$, then $0\in B_t$.  If $t\geq\gamma$, then
$0\notin\operatorname{int}B_t$.  Splitting the integral at $\gamma$
and using both bounds in \eqref{formula 2}, we obtain
\[
 \mathrm dJ\leq n|\Sph^3|
 \left(\int_0^\gamma t^{n-1}\,\mathrm dt
 +\frac12\int_\gamma^a t^{n-1}\,\mathrm dt\right)\mathrm ds
 =\frac{|\Sph^3|}{2}(a^n+\gamma^n)\,\mathrm ds.
\]
This proves \eqref{Refined Weighted bound}.

Assume \eqref{endpoint slope equality}.  The refined estimate gives
$|\Sph^3|a^n\leq\frac{|\Sph^3|}{2}(a^n+\gamma^n)$.
Hence $\gamma=a$, so $0\in A$, $g(0)=a$, and $0\in B_t$ for every $t<a$.
Put $\alpha_t(r)=\cH^3\bigl(\{\theta\in\Sph^3:r\theta\in B_t\}\bigr)$.
Since $0\in B_t$ and $B_t$ is convex, $\alpha_t$ is nonincreasing.  In terms of
left limits,
$\widehat J_t(R^2)=R^2\alpha_t(R-)$,
and therefore, for $0<h<R^2$,
\begin{equation}\label{endpoint slope bound}
 \frac{\widehat J_t(R^2)-\widehat J_t(R^2-h)}h
 \leq \alpha_t(R-)\leq|\Sph^3|.
\end{equation}
Using \eqref{def of L}, integrating
\eqref{endpoint slope bound}, and passing to the sequence in
\eqref{endpoint slope equality}, we obtain
\[
 |\Sph^3|a^n
 \leq n\int_0^a t^{n-1}\alpha_t(R-)\,\mathrm dt
 \leq|\Sph^3|a^n.
\]
Thus $\alpha_t(R-)=|\Sph^3|$ for almost every $t\in(0,a)$.  By monotonicity,
$\alpha_t(r)=|\Sph^3|$ for every $r<R$ and almost every such $t$.  Hence $B_t$
is dense on every sphere $r\Sph^3$, and its relative closure in $\B_R^4$ is
the whole ball.  Since $B_t\subset A$ and $A$ is relatively closed,
$A=\B_R^4$.
Choose $t_j\uparrow a$ among the preceding values.  Continuity of $g$ and the
density of $B_{t_j}$ give $g\geq t_j$ on $\B_R^4$.  Letting $j\to\infty$
proves $g\equiv a$.
\end{proof}

\begin{proof}[Proof of Theorem \ref{Fiberwise estimate theorem} in codimension four]
Set
\[
 b(x)=\big\|\det Q_x\,\mathbf1_{\cC_x}\big\|_{L^\infty(N_x\Sigma)},
 \qquad
 a(x)^n=\frac{b(x)}{\vartheta(x)}.
\]
The functions $a$ and $b$ are measurable.  Indeed, in a local trivialization
of the normal bundle, for every $t>0$ the set
$\{b>t\}$ is characterized by the positivity of the Lebesgue measure of
\[
 \{y\in\cC_x:\det Q_x(y)>t\},
\]
and measurability follows from Fubini's theorem.  Since $\mathcal E$ and
$\Sigma$ are bounded and $Q_x$ is affine in the normal variable, $a$ is
bounded.  The function $w_x$ is also bounded.

For $x\in\Omega$ with $a(x)>0$, define
\[
 g_x(z)=\left(
 \frac{\det Q_x(y_x^0+G_{NN}^{-1/2}z)}{\vartheta(x)}
 \right)^{1/n}
 \qquad\text{on }\widetilde{\cC}_x.
\]
The function $g_x$ is continuous and concave, because $Q_x$ is affine in the
normal variable and $A\mapsto(\det A)^{1/n}$ is concave on the positive
semidefinite cone \cite[Chapter~II]{Bhatia}.  Since $a(x)>0$, the convex fiber
$\widetilde{\cC}_x$ has positive four-dimensional measure and hence nonempty
interior.  Continuity then identifies the essential and pointwise suprema, so
$0\leq g_x\leq a(x)$.

For $x\in\Omega$ and almost every $0<s<R(x)^2$, set
\[
 J_x^0(s)=s\int_{\Sph^3}
 \frac{\det Q_x(y_x^0+G_{NN}^{-1/2}\sqrt{s}\,\theta)}{\vartheta(x)}
 \mathbf1_{\widetilde{\cC}_x}(\sqrt{s}\,\theta)\,\mathrm d\theta.
\]
Let $J_x=\mathcal L^-J_x^0$ on $(0,R(x)^2]$ and $J_x(0)=0$; outside
$\Omega$ put $R(x)=0$ and $J_x\equiv0$.  The integrand defining $J_x^0$ is
Borel in $(x,s,\theta)$, so \eqref{left average representative} makes
$(x,s)\mapsto J_x(s)$ jointly Borel.  If $a(x)=0$, then
$\det Q_x\mathbf1_{\cC_x}=0$ almost everywhere and $J_x\equiv0$.
If $a(x)>0$, apply Lemma \ref{Weighted radial lemma} to $g_x$.  In either case,
\begin{equation}\label{fiberwise finite difference}
 J_x(s_2)-J_x(s_1)
 \leq|\Sph^3|a(x)^n(s_2-s_1),
 \qquad0\leq s_1<s_2\leq R(x)^2.
\end{equation}

We pass from ambient volume to the radial data of the individual fibers.
Restricting the area formula to $\sqrt q\,\mathcal E$ produces an identity
with one radial integral on each fiber.  Differentiating in $q$ removes that
integral and allows the finite-difference estimate above to be applied
fiberwise.

For $0<q<1$, the area formula \eqref{contact area formula} with
$\varphi=\mathbf1_{\sqrt q\mathcal E}$, followed by
\eqref{Schur identity}, polar coordinates, and $s=|z|^2$, gives
\[
 c_{\mathcal E}\omega_{n+4}q^{\frac{n+4}{2}}
 =\frac12\int_\Sigma w_x
 \int_0^{(q-\tau_x)_+}J_x(s)\,\mathrm ds\,\mathrm d\mu(x).
\]
Since $J_x(0)=0$, \eqref{fiberwise finite difference} gives
\[
 0\leq J_x(s)\leq|\Sph^3|a(x)^n s.
\]
For fixed $x$, the function
\[
 q\longmapsto\int_0^{(q-\tau_x)_+}J_x(s)\,\mathrm ds
\]
is absolutely continuous and has derivative $J_x((q-\tau_x)_+)$ for almost
every $q$.  Moreover, $R(x)^2\leq1$, so this derivative is bounded by
$|\Sph^3|a(x)^n$.  Since $w_xa(x)^n$ is integrable, Fubini's theorem and
dominated convergence show that the integral over $\Sigma$ is absolutely
continuous and may be differentiated for almost every $q\in(0,1)$.  Thus
\begin{equation}\label{differentiated volume identity}
 (n+4)c_{\mathcal E}\omega_{n+4}q^{\frac{n+2}{2}}
 =\int_\Sigma w_xJ_x((q-\tau_x)_+)\,\mathrm d\mu(x).
\end{equation}
For $q_1<q_2$ in the full-measure set where this identity holds,
\eqref{fiberwise finite difference} gives
\[
 (n+4)c_{\mathcal E}\omega_{n+4}
 \left(q_2^{\frac{n+2}{2}}-q_1^{\frac{n+2}{2}}\right)
 \leq|\Sph^3|(q_2-q_1)\int_\Sigma w_xa(x)^n\,\mathrm d\mu.
\]
Dividing by $q_2-q_1$, letting $q_1,q_2\uparrow1$, and using
\begin{equation}\label{Ball Volume}
 (n+4)(n+2)\omega_{n+4}=2|\Sph^3|\omega_n
\end{equation}
we obtain
\begin{equation}\label{fiberwise determinant bound}
 c_{\mathcal E}\omega_n
 \leq\int_\Sigma w_x\frac{b(x)}{\vartheta(x)}
 \,\mathrm d\mu(x).
\end{equation}
By \eqref{conditional density identity} and
$|\mathcal E|=c_{\mathcal E}\omega_{n+4}$, this is \eqref{fiberwise estimate}.

We prove the rigidity assertion.  Assume equality.  Then
\begin{equation}\label{equality mass identity}
 \int_\Sigma w_xa(x)^n\,\mathrm d\mu=c_{\mathcal E}\omega_n.
\end{equation}
Letting $q\uparrow1$ through the full-measure set in
\eqref{differentiated volume identity}, and using
\eqref{fiberwise finite difference} as a dominating bound, gives
\[
 \int_\Sigma w_xJ_x(R(x)^2)\,\mathrm d\mu
 =(n+4)c_{\mathcal E}\omega_{n+4}.
\]
Choose $h_j\downarrow0$ such that $1-h_j$ belongs to the full-measure set in
\eqref{differentiated volume identity}.  Equality in the averaged estimate
forces the finite-difference bound to be saturated at the outer radius on
almost every nonzero fiber.  To measure the defect, put
\[
 \delta_h(x)=w_x\left[
 |\Sph^3|a(x)^n
 -\frac{J_x(R(x)^2)-J_x((R(x)^2-h)_+)}h
 \right].
\]
By \eqref{fiberwise finite difference}, $\delta_h\geq0$.  Using
\eqref{equality mass identity}, \eqref{differentiated volume identity},
and \eqref{Ball Volume},
\[
 \int_\Sigma\delta_h\,\mathrm d\mu
 =|\Sph^3|c_{\mathcal E}\omega_n
 -(n+4)c_{\mathcal E}\omega_{n+4}
 \frac{1-(1-h)^{\frac{n+2}{2}}}h\longrightarrow0.
\]
After passing to a subsequence, $\delta_{h_j}(x)\to0$ for almost every $x$.
If $a(x)>0$, equivalently $p_\infty(x)>0$, this implies $R(x)>0$, and for all
sufficiently large $j$,
\[
 \frac{J_x(R(x)^2)-J_x(R(x)^2-h_j)}{h_j}
 \longrightarrow|\Sph^3|a(x)^n.
\]
The last assertion of Lemma \ref{Weighted radial lemma}, applied to
$A=\widetilde{\cC}_x$ and $g=g_x$, proves
\eqref{fiberwise rigidity}.\qedhere
\end{proof}

\begin{proof}[Proof of Theorem \ref{Fiberwise estimate theorem} in lower codimension]
Let $1\leq m<4$, put $k=4-m$, and write
$\mathcal E=\{\xi:\langle G\xi,\xi\rangle<1\}$ and
$c_{\mathcal E}=(\det G)^{-1/2}$.  Set
\[
 b(x)=\big\|\det Q_x\,\mathbf1_{\cC_x}\big\|_{L^\infty(N_x\Sigma)}.
\]
For $L>0$, embed the immersion as $X_L=(X,0)$ in
$\R^{n+m}\times\R^k$ and set
\[
 \mathcal E_L
 =\{(\xi,z):\langle G\xi,\xi\rangle+L^{-2}|z|^2<1\}.
\]
The boundary condition is unchanged because
$h_{\mathcal E_L}(\eta,0)=h_{\mathcal E}(\eta)$.  The new normal directions
are flat, so the multiplicity is unchanged and
\[
 Q_{L,x}(y,z)=Q_x(y).
\]
The global supporting inequality is independent of $z$, so the extended
contact fiber is
\[
 \cC_{L,x}=\bigl\{(y,z):y\in\cC_x,
 |z|^2<L^2\bigl(1-\langle G(v(x)+y),v(x)+y\rangle\bigr)\bigr\}.
\]
Its projection onto $N_x\Sigma$ is $\cC_x$, and every $y\in\cC_x$ has a
nonempty open $z$-slice.  Thus the essential supremum of the determinant is
still $b(x)$.  On the other hand,
\[
 w_{L,x}=L^k w_x,
 \qquad c_{\mathcal E_L}=L^k c_{\mathcal E}.
\]
Applying the codimension-four determinant bound
\eqref{fiberwise determinant bound} to $(X_L,\mathcal E_L)$ and cancelling
$L^k$ gives
\[
 c_{\mathcal E}\omega_n
 \leq\int_\Sigma w_x\frac{b(x)}{\vartheta(x)}\,\mathrm d\mu(x).
\]
Since $|\mathcal E|=c_{\mathcal E}\omega_{n+m}$, the conditional density
identity \eqref{conditional density identity} proves
\eqref{fiberwise estimate}.  The flat example following the theorem proves
sharpness.
\end{proof}

\section{Applications}\label{section applications}

\subsection{The anisotropic Michael--Simon--Sobolev inequality}

We first prove Corollary \ref{Main Corollary} in codimension four with
$\mathcal E\subset\R^{n+4}$.  Lower codimensions are handled at the end by
flat extension.  Assume that $\Sigma$ is connected and put
\[
 M=\int_\Sigma f^{n/(n-1)}\,\mathrm d\mu.
\]
The left-hand side of \eqref{Main inequality} is positive.  If
$\partial\Sigma\neq\varnothing$, its boundary term is positive.  If
$\partial\Sigma=\varnothing$ and the left-hand side vanishes, then
$\nabla^\Sigma f+fH=0$.  Its tangential and normal parts vanish separately,
so $H=0$, and Stokes' theorem gives
\[
 0=\int_\Sigma\Delta_\Sigma|X|^2\,\mathrm d\mu
  =\int_\Sigma\bigl(2n+2\langle X,H\rangle\bigr)\,\mathrm d\mu
  =2n|\Sigma|,
\]
a contradiction.  By homogeneity, we may assume
\begin{equation}\label{Normalization}
 \int_\Sigma h_{\mathcal E}(\nabla^\Sigma f+fH)\,\mathrm d\mu
 +\int_{\partial\Sigma}f h_{\mathcal E}(\eta)\,\mathrm d\sigma=nM.
\end{equation}

The boundary datum in the auxiliary equation below is chosen to reproduce
the boundary term in \eqref{Main inequality}, as in the ABP proof of the Wulff
inequality \cite{CabreRosOtonSerra}.  Consider the Neumann problem
\begin{align}
 \diver_\Sigma(f\nabla^\Sigma u)
 &=nf^{\frac n{n-1}}
   -h_{\mathcal E}(\nabla^\Sigma f+fH)
 &&\text{in }\Sigma,\label{Neumann equation}\\
 \langle\nabla^\Sigma u,\eta\rangle&=h_{\mathcal E}(\eta)
 &&\text{on }\partial\Sigma.\label{Neumann boundary}
\end{align}
By \eqref{Normalization},
\[
 \int_\Sigma\left(nf^{\frac n{n-1}}
 -h_{\mathcal E}(\nabla^\Sigma f+fH)\right)\,\mathrm d\mu
 =\int_{\partial\Sigma}f h_{\mathcal E}(\eta)\,\mathrm d\sigma,
\]
which is the compatibility condition.  Hence
\eqref{Neumann equation}--\eqref{Neumann boundary} has a solution unique up to
an additive constant.  Since $f>0$, division by $f$ gives a uniformly
elliptic equation with smooth coefficients.  The function
$h_{\mathcal E}(z)=\sqrt{\langle G^{-1}z,z\rangle}$ is Lipschitz, so the
interior right-hand side belongs to $C^{0,\alpha}$ for every
$\alpha\in(0,1)$.  The boundary operator is the outward normal derivative and
is therefore uniformly oblique.  Moreover, $|\eta|=1$ and $h_{\mathcal E}$ is
smooth away from the origin, so $x\mapsto h_{\mathcal E}(\eta(x))$ is smooth.
Global Schauder theory for the oblique derivative problem
\cite[Section~6.7]{GilbargTrudinger}
therefore gives $u\in C^{2,\alpha}(\Sigma)$ for every $\alpha\in(0,1)$.  We
apply the construction of Section~\ref{section inverse transport} to this
$u$.

\begin{lemma}\label{Determinant lemma}
Set $Z=\nabla^\Sigma\log f+H\in\R^{n+4}$.  For every $(x,y)\in\cC$,
\begin{equation}\label{Trace identity}
 \tr Q_x(y)=nf(x)^{\frac{1}{n-1}}
 -\big(h_{\mathcal E}(Z(x))+\langle Z(x),\Phi(x,y)\rangle\big),
\end{equation}
and
\begin{equation}\label{Determinant inequality}
 0\leq\bigl(\det Q_x(y)\bigr)^{\frac{1}{n}}
 \leq\frac{\tr Q_x(y)}n\leq f(x)^{\frac{1}{n-1}}.
\end{equation}
\end{lemma}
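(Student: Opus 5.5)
The plan is to expand the trace of $Q_x(y)=D^2_\Sigma u(x)-\langle\mathrm{II}(x),y\rangle$ using the Neumann equation, and then use the fact that $\Phi(x,y)$ lies in $\mathcal E$ to get a one-sided bound via the support function.

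\textbf{Step 1: the trace identity.} Taking the trace gives
\[
\tr Q_x(y)=\Delta_\Sigma u(x)-\langle H(x),y\rangle .
\]
Expanding \eqref{Neumann equation}, $\diver_\Sigma(f\nabla^\Sigma u)=f\Delta_\Sigma u+\langle\nabla^\Sigma f,\nabla^\Sigma u\rangle$, and dividing by $f$ yields
\[
\Delta_\Sigma u=nf^{\frac1{n-1}}-\tfrac1f h_{\mathcal E}(\nabla^\Sigma f+fH)-\langle\nabla^\Sigma\log f,v\rangle .
\]
Since $h_{\mathcal E}$ is positively $1$-homogeneous and $f>0$, we have $\tfrac1f h_{\mathcal E}(\nabla^\Sigma f+fH)=h_{\mathcal E}(Z)$. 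Next, $\nabla^\Sigma\log f$ and $v$ are tangent, while $H$ and $y$ are normal. Hence
\[
\langle Z,\Phi(x,y)\rangle=\langle\nabla^\Sigma\log f,v\rangle+\langle H,y\rangle .
\]
Combining these three displays gives \eqref{Trace identity}. This step is a direct computation.

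\textbf{Step 2: the determinant inequality.} On $\cC$ we have $Q_x(y)\ge0$ by \eqref{QPositive}. The arithmetic--geometric mean inequality applied to the nonnegative eigenvalues gives
\[
0\le(\det Q_x(y))^{1/n}\le\tfrac1n\tr Q_x(y).
\]
For the last inequality we use $\Phi(x,y)\in\mathcal E$, which holds by the definition of $\cC$. Since $\mathcal E$ is centered, it is symmetric, so $-\Phi(x,y)\in\mathcal E$ as well. Therefore
\[
-\langle Z,\Phi(x,y)\rangle=\langle Z,-\Phi(x,y)\rangle\le h_{\mathcal E}(Z),
\]
that is, $h_{\mathcal E}(Z)+\langle Z,\Phi(x,y)\rangle\ge0$. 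Inserting this into \eqref{Trace identity} gives $\tr Q_x(y)\le nf(x)^{1/(n-1)}$, which completes \eqref{Determinant inequality}.

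\textbf{Main obstacle.} The only delicate point is bookkeeping: we need $u\in C^{2,\alpha}$ so that the equation holds pointwise in the interior, and we need the tangential/normal splitting to be carried out correctly. Central symmetry of $\mathcal E$ is essential in Step 2; for a non-symmetric target one would instead need $-\Phi(x,y)\in\mathcal K$, or the sign convention in the equation would have to be changed.
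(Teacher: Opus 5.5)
Your proof is correct and follows the paper's argument essentially line by line. You divide the Neumann equation by $f$ using the homogeneity of $h_{\mathcal E}$, and use the tangent/normal splitting to get $\langle\nabla^\Sigma\log f,v\rangle+\langle H,y\rangle=\langle Z,\Phi(x,y)\rangle$. You then combine $Q_x(y)\geq0$, the arithmetic--geometric mean inequality, and central symmetry of $\mathcal E$ together with $\Phi(x,y)\in\mathcal E$, exactly as the paper does.
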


\begin{proof}
Dividing \eqref{Neumann equation} by $f$ gives
\[
 \Delta_\Sigma u=nf^{\frac{1}{n-1}}-h_{\mathcal E}(Z)
 -\langle\nabla^\Sigma\log f,v\rangle.
\]
Since $v$ is tangent and $y$ is normal,
$\langle\nabla^\Sigma\log f,v\rangle+\langle H,y\rangle
=\langle Z,\Phi(x,y)\rangle$.  Subtracting $\langle H,y\rangle$ therefore
gives \eqref{Trace identity}.  Moreover,
$\mathcal E$ is centrally symmetric and $\Phi(x,y)\in\mathcal E$, so
$h_{\mathcal E}(Z)\geq\langle Z,-\Phi(x,y)\rangle$.  Combining
\eqref{QPositive}, \eqref{Trace identity}, and the arithmetic--geometric mean
inequality \cite{Bhatia} gives \eqref{Determinant inequality}.
\end{proof}

\begin{proof}[Proof of Corollary \ref{Main Corollary}]
\medskip

\noindent\textbf{Step 1: The sharp inequality.}
Set
\[
 b(x)=\big\|\det Q_x\,\mathbf1_{\cC_x}\big\|_{L^\infty(N_x\Sigma)}.
\]
Theorem \ref{Fiberwise estimate theorem}, in the equivalent form
\eqref{fiberwise determinant bound}, and Lemma \ref{Determinant lemma} give
\begin{align}
 c_{\mathcal E}\omega_n
 &\leq\int_\Sigma w_x\frac{b(x)}{\vartheta(x)}\,\mathrm d\mu(x)
 \notag\\
 &\leq\int_\Sigma
 w_x\frac{f^{\frac n{n-1}}}{\vartheta(x)}\,\mathrm d\mu(x)
 \notag\\
 &=c_{\mathcal E}\omega_n
 \int_\Sigma\frac{f^{\frac n{n-1}}}{\vartheta(x)V_x}\,\mathrm d\mu(x)
 \notag\\
 &\leq\frac{c_{\mathcal E}\omega_n}{V_*(\mathcal E)}M.
 \label{Mass Bound}
\end{align}
where we used \eqref{Schur Jacobian}, $\vartheta\geq1$, and
$V_x\geq V_*(\mathcal E)$.  Cancelling $c_{\mathcal E}\omega_n$ gives
$M\geq V_*(\mathcal E)$.  Undoing the normalization proves
\eqref{Main inequality} for connected submanifolds of codimension four.

\medskip

\noindent\textbf{Step 2: The equality case.}
Assume equality in \eqref{Main inequality} under the normalization
\eqref{Normalization}.  Then $M=V_*(\mathcal E)$ and every inequality in
\eqref{Mass Bound} is an equality.  Since $f>0$,
\begin{equation}\label{Equality Masses}
 \vartheta(x)=1,
 \qquad V_x=V_*(\mathcal E),
 \qquad b(x)=f(x)^{\frac n{n-1}}
\end{equation}
for almost every $x$.  In particular, $p_\infty>0$ almost everywhere.
Equality also holds in \eqref{fiberwise estimate}.  By the rigidity part of
Theorem \ref{Fiberwise estimate theorem}, for almost every $x$,
\[
 \widetilde{\cC}_x=\B^4_{R(x)},
 \qquad
 \det Q_x(y)=f(x)^{\frac n{n-1}}
 \quad\text{for every }y\in\cC_x.
\]
In particular, $R(x)>0$ almost everywhere.  By \eqref{Schur identity}, the
identity $\widetilde{\cC}_x=\B^4_{R(x)}$ says that, for these $x$, $\cC_x$ is
the entire normal slice
\[
 \{y\in N_x\Sigma:v(x)+y\in\mathcal E\}.
\]
Hence equality holds in \eqref{Determinant inequality} for every point of this
open slice, and therefore
\begin{align}
 Q_x(y)&=f(x)^{\frac{1}{n-1}}I_{T_x\Sigma}
 &&\text{whenever }v(x)+y\in\mathcal E,\label{QEquality}\\
 h_{\mathcal E}(Z(x))+\langle Z(x),v(x)+y\rangle&=0
 &&\text{whenever }v(x)+y\in\mathcal E.\label{Trace Equality}
\end{align}
Choose any such $y$ and put $\xi=v(x)+y\in\mathcal E$.  If $Z(x)\neq0$,
then $-\xi$ is an interior point of $\mathcal E$, and therefore
$h_{\mathcal E}(Z(x))>\langle Z(x),-\xi\rangle$, contrary to
\eqref{Trace Equality}.  Hence $Z(x)=0$.  Its tangential and normal parts give
$\nabla^\Sigma f(x)=0$ and $H(x)=0$.  Comparing the coefficients of $y$ in
\eqref{QEquality}, which holds on an open subset of the normal fiber, gives
\begin{equation}\label{Identities}
 \mathrm{II}(x)=0,
 \qquad D^2_\Sigma u(x)=f(x)^{\frac{1}{n-1}}I_{T_x\Sigma}.
\end{equation}
These identities hold on a full-measure, hence dense, subset of $\Sigma$.
All terms are continuous, so they hold everywhere.

It follows that $f$ is constant.  Since the differential of the Gauss map is
determined by $\mathrm{II}$, the identity $\mathrm{II}=0$ and the connectedness
of $\Sigma$ imply that the tangent plane is constant.  Thus the image is
contained in an affine $n$-plane parallel to some linear plane $P$, and the
restriction of $X$ to the interior is a local diffeomorphism into that affine
plane.  By \eqref{Equality Masses} and continuity,
$|\operatorname{proj}_P\mathcal E|=V_*(\mathcal E)$.
The boundary cannot be empty, because otherwise $X(\Sigma)$ would be both
open and compact in the affine plane.

Let $c>0$ be such that $f\equiv c^{n-1}$ and put
$K=\operatorname{proj}_P\mathcal E$.  Identifying tangent vectors with their
images under $dX$, \eqref{Identities} shows that the ambient derivative of
$\nabla^\Sigma u-cX$ vanishes.  Hence
$\nabla^\Sigma u=c(X-x_0)$ for some point $x_0$ in the affine plane.  Since
$R(x)>0$ almost everywhere, continuity gives $\tau_x\leq1$ on $\Sigma$.
Here $v=c(X-x_0)\in P$, the Schur complement $S_x$ is independent of $x$,
and $K=\{z\in P:\langle S_xz,z\rangle<1\}$.  Thus
\begin{equation}\label{image in Wulff body}
 X(\Sigma)\subset x_0+c^{-1}\overline K.
\end{equation}
For $z\in P$, the definition of the support function gives
\[
 h_K(z)=\sup_{\xi\in\mathcal E}
 \langle z,\operatorname{proj}_P\xi\rangle=h_{\mathcal E}(z).
\]
On the boundary, \eqref{Neumann boundary} therefore becomes
$\langle c(X-x_0),\eta\rangle=h_K(\eta)$.  Together with
\eqref{image in Wulff body}, this says that
$c(X-x_0)\in\partial K$ at every boundary point.  No interior point can map to
$x_0+c^{-1}\partial K$, since $X$ is a local diffeomorphism into the affine
plane and its image lies in the closed convex body.  Therefore
$X:\operatorname{int}\Sigma\to x_0+c^{-1}K$ is a local diffeomorphism.  It is
proper: the preimage of a compact subset of $x_0+c^{-1}K$ is closed in the
compact space $\Sigma$ and, because the boundary maps to
$x_0+c^{-1}\partial K$, is contained in $\operatorname{int}\Sigma$.

A proper local diffeomorphism is a covering map \cite{Lee}.  Its image is
therefore nonempty, open, and closed in the connected set $x_0+c^{-1}K$, hence
is the whole set.  The interior of a connected manifold with boundary is
connected \cite{Lee}, while the ellipsoid $K$ is simply connected.
Consequently, the covering has one sheet, and $X$ is a diffeomorphism between
the interiors.

Because $X$ is an immersion and maps boundary to boundary,
\[
dX(T_x\partial\Sigma)
=
T_{X(x)}\partial\bigl(x_0+c^{-1}\overline K\bigr)
\quad\text{for every }x\in\partial\Sigma.
\]
Since $dX$ has rank $n$, the image of an inward conormal to
$\partial\Sigma$ has a nonzero component transverse to the target
boundary.  The inclusion $X(\Sigma)\subset x_0+c^{-1}\overline K$
forces this component to point inward.  Hence the inverse function theorem
for manifolds with boundary shows that $X$ is a diffeomorphism from a
collar neighborhood of each boundary point onto a relative neighborhood
in $x_0+c^{-1}\overline K$.  If two distinct boundary points had the same
image, choose disjoint such collar neighborhoods.  Their images would
contain the same smaller inward
half-neighborhood, giving interior points with two preimages and
contradicting the one-sheeted covering.  Consequently, $X$ is an
embedding onto $x_0+c^{-1}\overline K$.

Conversely, suppose that $f$ is constant and $X(\Sigma)$ is a translate of
$r\overline K$ for a minimizing plane $P$.  After translating the image,
assume $X(\Sigma)=r\overline K$.  If $x\in\partial(rK)$ and $\eta$ is its
outward unit normal, then
$h_K(\eta)=r^{-1}\langle x,\eta\rangle$.  The divergence theorem gives
\[
 \int_{\partial(rK)}h_K(\eta)\,\mathrm d\sigma
 =\frac1r\int_{\partial(rK)}\langle x,\eta\rangle\,\mathrm d\sigma
 =nr^{n-1}|K|.
\]
Together with $h_{\mathcal E}|_P=h_K$ and $|rK|=r^n|K|$, this gives equality
in \eqref{Main inequality}.

\medskip

\noindent\textbf{Step 3: Disconnected submanifolds and lower codimensions.}
Suppose now that $\Sigma=\bigsqcup_j\Sigma_j$ is disconnected, and put
$M_j=\int_{\Sigma_j}f^{\frac n{n-1}}\,\mathrm d\mu$.  Summing the connected
inequality over the components gives
\[
 \int_\Sigma h_{\mathcal E}(\nabla^\Sigma f+fH)\,\mathrm d\mu
 +\int_{\partial\Sigma}f h_{\mathcal E}(\eta)\,\mathrm d\sigma
 \geq nV_*(\mathcal E)^{\frac{1}{n}}\sum_jM_j^{\frac{n-1}{n}}
 \geq nV_*(\mathcal E)^{\frac{1}{n}}
 \left(\sum_jM_j\right)^{\frac{n-1}{n}}.
\]
The last inequality is strict if at least two $M_j$ are positive.  Since
$f>0$, equality forces $\Sigma$ to be connected.

Finally, let $m<4$.
The fiber estimate in these dimensions was already obtained at the end of
Section~\ref{section fiberwise estimate}.  Here we choose the added semiaxes
so that the minimum projection volume, and hence the equality case, is also
preserved.  After an orthogonal change of coordinates, write the
semiaxes of $\mathcal E$ as
$0<a_1\leq\cdots\leq a_{n+m}$.  Embed $\R^{n+m}$ linearly into
$\R^{n+4}$ and extend $\mathcal E$ to an ellipsoid
$\widetilde{\mathcal E}\subset\R^{n+4}$ by adjoining $4-m$ semiaxes all
equal to some $L>a_{n+m}$.  For vectors $z$ in the original ambient space,
$h_{\widetilde{\mathcal E}}(z,0)=h_{\mathcal E}(z)$.  Moreover, the minimum
volume of an $n$-dimensional projection of an ellipsoid with ordered semiaxes
$b_1\leq\cdots\leq b_{n+4}$ is $\omega_n b_1\cdots b_n$.  Indeed, take
$A=\operatorname{diag}(b_1,\ldots,b_{n+4})$ and write the ellipsoid as $A\B^{n+4}$.
If the columns of a $(n+4)\times n$ matrix $U$ form an orthonormal basis of the
projection plane, the projected volume is $ \omega_n\det(U^{\mathsf T}A^2U)^{1/2}.$
The min--max principle for the eigenvalues of $U^{\mathsf T}A^2U$
\cite{Bhatia} gives
$\det(U^{\mathsf T}A^2U)\geq b_1^2\cdots b_n^2$, with equality on the first $n$
principal axes.  Hence
$V_*(\widetilde{\mathcal E})=V_*(\mathcal E)$.

The extra normal directions are parallel and flat, so the induced metric,
$\mathrm{II}$, $H$, and the boundary measure are unchanged.  The codimension
four result applied to $\widetilde{\mathcal E}$ proves the inequality for
every $1\leq m\leq4$.

In the equality case, the image is an $n$-dimensional convex body with
nonempty relative interior and is already contained in the original ambient
space.  Its affine span, and hence its tangent plane $P$, therefore lies in
that ambient space.  Moreover,
$\operatorname{proj}_P\widetilde{\mathcal E}
 =\operatorname{proj}_P\mathcal E$, so the preceding argument gives the
equality case in Corollary \ref{Main Corollary}.
\end{proof}

Since $H\perp T\Sigma$, for $\mathcal E=\B^{n+m}$ one has
$h_{\mathcal E}(\nabla^\Sigma f+fH)
=\sqrt{|\nabla^\Sigma f|^2+f^2|H|^2}$.  Hence Corollary
\ref{Main Corollary} contains the following isotropic form.

\begin{corollary}
Let $n\geq2$ and $1\leq m\leq4$.  Under the assumptions on $X$ and $\Sigma$
in Corollary \ref{Main Corollary}, every positive $f\in C^\infty(\Sigma)$
satisfies
\[
 \int_\Sigma \sqrt{|\nabla^\Sigma f|^2+f^2|H|^2}\,\mathrm d\mu
 +\int_{\partial\Sigma}f\,\mathrm d\sigma
 \geq n\omega_n^{1/n}
 \left(\int_\Sigma f^{\frac n{n-1}}\,\mathrm d\mu\right)^{\frac{n-1}{n}}.
\]
Equality holds if and only if $f$ is constant and $X$ is an embedding whose
image is a flat round $n$-ball.
\end{corollary}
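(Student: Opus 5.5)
This is the special case $\mathcal E=\B^{n+m}$ of Corollary \ref{Main Corollary}, so the plan is to specialize, computing the support function and $V_*$ and then translating the equality case. For the ball, $G=I$ and $h_{\B^{n+m}}(z)=|z|$. Since $\nabla^\Sigma f$ is tangent and $H$ is normal, they are orthogonal, and Pythagoras gives
\[
 h_{\B^{n+m}}(\nabla^\Sigma f+fH)=\sqrt{|\nabla^\Sigma f|^2+f^2|H|^2}.
\]
For the unit conormal we get $h_{\B^{n+m}}(\eta)=|\eta|=1$, so the boundary term reduces to $\int_{\partial\Sigma}f\,\mathrm d\sigma$. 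Every orthogonal projection of the unit ball onto an $n$-plane is the unit $n$-ball $\B^n\cap P$, so $V_*(\B^{n+m})=\omega_n$, and every $n$-plane $P$ realizes the minimum. Substituting these three facts into \eqref{Main inequality} gives the displayed inequality with constant $n\omega_n^{1/n}$.

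For the equality case, Corollary \ref{Main Corollary} says equality holds exactly when $\Sigma$ is connected, $f$ is constant, and $X$ embeds $\Sigma$ onto $x_0+r\,\overline{\operatorname{proj}_P\B^{n+m}}$ for some minimizing plane $P$. Since $\operatorname{proj}_P\B^{n+m}$ is the unit $n$-ball in $P$ and every $P$ is minimizing, this condition says precisely that $X$ is an embedding onto a flat round $n$-ball $x_0+r\overline{\B^n_P}$.

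Connectedness does not need to be stated separately, because an embedding onto a ball forces $\Sigma$ to be connected. Conversely, if $f$ is constant and $X$ embeds onto a flat round ball, then $\Sigma$ is connected and the image has the required form, so the "if" direction of the corollary applies.

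No step is a real obstacle. The only point to check is that $H\perp T\Sigma$ holds pointwise, which is true because $H=\operatorname{tr}_\Sigma\mathrm{II}$ takes values in the normal bundle.
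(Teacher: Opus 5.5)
Your proposal is correct and follows the paper's route: the paper obtains this corollary by specializing Corollary \ref{Main Corollary} to $\mathcal E=\B^{n+m}$, using $H\perp T\Sigma$ to write $h_{\B^{n+m}}(\nabla^\Sigma f+fH)=\sqrt{|\nabla^\Sigma f|^2+f^2|H|^2}$. Your further checks fill in what the paper leaves implicit: that $h_{\B^{n+m}}(\eta)=1$, that $V_*(\B^{n+m})=\omega_n$ with every $n$-plane minimizing, and that connectedness is automatic in the equality case.
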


\begin{corollary}[Sharp ellipsoidal Wulff inequality]\label{Wulff corollary}
Let $n\geq2$ and $1\leq m\leq4$.  Let
$X:\Sigma\to\R^{n+m}$ be a compact minimal immersion with smooth boundary,
and let $\mathcal E\subset\R^{n+m}$ be a centered ellipsoid.  Then
$\partial\Sigma\ne\varnothing$ and
\[
 \int_{\partial\Sigma}h_{\mathcal E}(\eta)\,\mathrm d\sigma
 \geq nV_*(\mathcal E)^{1/n}|\Sigma|^{\frac{n-1}{n}}.
\]
Equality holds if and only if, for an $n$-plane $P$ realizing
$V_*(\mathcal E)$, $X$ is an embedding onto
$x_0+r\,\overline{\operatorname{proj}_P\mathcal E}$ for some
$x_0\in\R^{n+m}$ and $r>0$.
\end{corollary}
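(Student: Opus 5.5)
The plan is to specialize Corollary \ref{Main Corollary} to $f\equiv1$. Because $X$ is minimal, $H=0$, and with $\nabla^\Sigma f=0$ the interior term $\int_\Sigma h_{\mathcal E}(\nabla^\Sigma f+fH)\,\mathrm d\mu$ equals $\int_\Sigma h_{\mathcal E}(0)\,\mathrm d\mu=0$. The left-hand side of \eqref{Main inequality} therefore reduces to $\int_{\partial\Sigma}h_{\mathcal E}(\eta)\,\mathrm d\sigma$. The right-hand side becomes $nV_*(\mathcal E)^{1/n}|\Sigma|^{(n-1)/n}$, since $\int_\Sigma 1\,\mathrm d\mu=|\Sigma|$. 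This yields the stated inequality directly, for every $1\leq m\leq4$.

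Next I show $\partial\Sigma\neq\varnothing$ using the argument already given at the start of Section~\ref{section applications}. If the boundary were empty, then $H=0$ and Stokes' theorem give
\[
0=\int_\Sigma\Delta_\Sigma|X|^2\,\mathrm d\mu=2n|\Sigma|,
\]
which is impossible. Alternatively, the inequality just proved would force $0\geq nV_*(\mathcal E)^{1/n}|\Sigma|^{(n-1)/n}>0$.

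For the equality case, equality in the Wulff inequality is exactly equality in \eqref{Main inequality} with $f\equiv1$. The equality statement of Corollary \ref{Main Corollary} then forces $\Sigma$ to be connected and $X$ to be an embedding onto $x_0+r\,\overline{\operatorname{proj}_P\mathcal E}$ for some minimizing plane $P$. For the converse, such a flat piece of plane is minimal, and $f\equiv1$ is constant, so the converse direction of Corollary \ref{Main Corollary} gives equality.

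The only point that needs care is to make sure nothing in Corollary \ref{Main Corollary} silently excludes $f$ constant or the minimal case. It does not: $f\equiv1$ is positive and smooth, and the corollary places no restriction on $H$. So I expect no real obstacle, only bookkeeping.
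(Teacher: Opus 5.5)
Your proposal is correct and follows the paper's own route: the Wulff inequality is Corollary~\ref{Main Corollary} with $f\equiv1$ and $H=0$, and $\partial\Sigma\neq\varnothing$ follows from $\Delta_\Sigma|X|^2=2n$, as noted at the start of Subsection~\ref{section symmetrization}. One detail you leave implicit is the converse direction: $\Sigma$ is automatically connected there, because $X$ maps it diffeomorphically onto the convex body $x_0+r\,\overline{\operatorname{proj}_P\mathcal E}$, so the connectedness requirement in the equality clause of Corollary~\ref{Main Corollary} is met.
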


\subsection{Convex symmetrization on minimal submanifolds}
\label{section symmetrization}

A compact minimal immersion into Euclidean space cannot be boundaryless, as
$\Delta_\Sigma|X|^2=2n$.  Thus the minimal submanifolds in this section have
nonempty boundary, and $W_0^{1,p}(\Sigma)$ is understood in the usual trace
sense.

Fix an $n$-plane $P$ realizing $V_*(\mathcal E)$ and set
\[
 K=\operatorname{proj}_P\mathcal E\subset P,
 \qquad |K|=V_*(\mathcal E).
\]
Throughout this subsection, $K$ denotes this fixed projected ellipsoid.
We denote the Euclidean gradient and Lebesgue measure on $P$ by $\nabla^P$
and $\mathrm dz$.  This is the convex symmetrization associated with $K$
\cite{AlvinoFeroneLionsTrombetti,EspositoTrombetti}; see also
\cite{BrothersZiemer} for the classical rearrangement framework.  For a
nonnegative measurable function $u$ on $\Sigma$,
let
\[
 \mu_u(t)=|\{u>t\}|,
 \qquad
 r_u(t)=\left(\frac{\mu_u(t)}{|K|}\right)^{1/n},
\]
and define its $K$-symmetrization on $P$ by
\[
 u^K(z)=\int_0^\infty\mathbf1_{r_u(t)K}(z)\,\mathrm dt.
\]
Thus $u$ and $u^K$ are equimeasurable and the superlevel sets of $u^K$ are
homothetic copies of $K$.  Since $r_u(t)\leq r_u(0)$, the function $u^K$ is
supported in the compact set $\overline{r_u(0)K}\subset P$.

\begin{theorem}[Sharp anisotropic P\'olya--Szeg\H{o} principle]
\label{Polya Szego theorem}
Let $n\geq2$ and $1\leq m\leq4$, and let
$X:\Sigma\to\R^{n+m}$ be a compact minimal immersion with smooth boundary.
Let $\mathcal E\subset\R^{n+m}$ be a centered ellipsoid, choose $P$ and $K$
as above, and let $1<p<\infty$.  If $u\in W_0^{1,p}(\Sigma)$ is
nonnegative, then $u^K\in W^{1,p}(P)$ and
\begin{equation}\label{anisotropic Polya Szego}
 \int_P h_K(\nabla^P u^K)^p\,\mathrm dz
 \leq
 \int_\Sigma h_{\mathcal E}(\nabla^\Sigma u)^p\,\mathrm d\mu.
\end{equation}
The constant $1$ in \eqref{anisotropic Polya Szego} is optimal.
\end{theorem}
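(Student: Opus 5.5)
The plan is the classical Talenti / Brothers--Ziemer scheme: coarea formula on both sides, the sharp ellipsoidal Wulff inequality (Corollary \ref{Wulff corollary}) on the superlevel sets of $u$, the exact Wulff identity on the homothetic level sets of $u^K$, and Jensen/H\"older in the $p$-integrand.

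First reduce by density to $u$ smooth and nonnegative, vanishing on $\partial\Sigma$. Perturb $u$ so that, by Sard, almost every level $t>0$ is regular. Then $\Sigma_t=\{u\geq t\}$ is a compact minimal submanifold with smooth boundary $\{u=t\}$, on which the outward conormal is $\eta=-\nabla^\Sigma u/|\nabla^\Sigma u|$. The passage to general $W^{1,p}_0$ uses lower semicontinuity of the left side, since $u\mapsto u^K$ is continuous in $L^p$ as rearrangements are $L^p$-contractions, and convexity of $h_K^p$. The statement $u^K\in W^{1,p}(P)$ follows from the inequality itself together with the fact that $u^K$ is bounded in $W^{1,p}$ along the approximating sequence.

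Since $\mathcal E$ is centered, $h_{\mathcal E}$ is even, so $h_{\mathcal E}(\eta)=h_{\mathcal E}(\nabla^\Sigma u)/|\nabla^\Sigma u|$ on $\{u=t\}$. Apply Corollary \ref{Wulff corollary} to $\Sigma_t$ (or to each component, then sum using concavity of $s^{(n-1)/n}$). This gives
\[
 \int_{\{u=t\}}\frac{h_{\mathcal E}(\nabla^\Sigma u)}{|\nabla^\Sigma u|}\,\mathrm d\sigma\geq n|K|^{1/n}\mu_u(t)^{\frac{n-1}{n}}.
\]
The right-hand side is exactly the anisotropic perimeter $\int_{\partial(r_u(t)K)}h_K(\eta)\,\mathrm d\sigma$ computed in Step 2 of the proof of Corollary \ref{Main Corollary}. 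For $u^K$, whose level sets are $r_u(t)K$ and whose gradient has constant anisotropic norm on each level set, the corresponding Wulff identity holds with equality. Also $-\mu_u'(t)\geq\int_{\{u=t\}}|\nabla^\Sigma u|^{-1}\,\mathrm d\sigma$, and the analogous relation is an equality for $u^K$ once one checks that $u^K$ is locally Lipschitz when $u$ is smooth. This check is routine: $\mu_u$ is strictly decreasing on the range of $u$ for smooth $u$ with Sard-regular levels.

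Next apply H\"older on each level set, with $\mathrm d\sigma/|\nabla u|$ as the weight:
\[
 \int_{\{u=t\}}\frac{h_{\mathcal E}(\nabla u)}{|\nabla u|}\,\mathrm d\sigma\leq\Bigl(\int_{\{u=t\}}\frac{h_{\mathcal E}(\nabla u)^p}{|\nabla u|}\,\mathrm d\sigma\Bigr)^{1/p}\Bigl(\int_{\{u=t\}}\frac{\mathrm d\sigma}{|\nabla u|}\Bigr)^{1-1/p}.
\]
Combining this with the Wulff bound and $-\mu_u'\geq\int_{\{u=t\}}|\nabla u|^{-1}\,\mathrm d\sigma$ gives
\[
 \int_{\{u=t\}}\frac{h_{\mathcal E}(\nabla u)^p}{|\nabla u|}\,\mathrm d\sigma\geq\bigl(n|K|^{1/n}\mu_u(t)^{\frac{n-1}{n}}\bigr)^p(-\mu_u'(t))^{1-p}.
\]
The same quantity for $u^K$ equals the right-hand side exactly, because on each level set $h_K(\nabla u^K)$ and $|\nabla u^K|$ are tied by the homothety structure, and H\"older is therefore an equality there. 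Integrating in $t$ via the coarea formula gives \eqref{anisotropic Polya Szego}, restricted to the regular part where $\nabla u\ne0$. The critical set contributes nothing to the left side of \eqref{anisotropic Polya Szego}, by the standard Brothers--Ziemer argument that $\{\nabla u^K=0\}$ has the measure of the critical set.

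Optimality of the constant $1$: take $\Sigma=\overline{K}\subset P$ with the inclusion, which is flat and minimal. There $h_{\mathcal E}|_P=h_K$, and any $u$ already $K$-symmetric has $u^K=u$, so equality holds.

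I expect the main obstacle to be rigor on the $u^K$ side: proving $u^K\in W^{1,p}$, establishing the exact coarea identities for $u^K$, and handling the absolutely continuous part of $\mu_u$ when $u$ has flat zones. I would follow the Brothers--Ziemer treatment, reducing by approximation to smooth $u$ whose critical values form a null set.
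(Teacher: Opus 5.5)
Your proposal is correct and follows the same route as the paper. Both use the Wulff inequality of Corollary~\ref{Wulff corollary} on regular superlevel sets, H\"older and coarea on level sets, the exact one-dimensional energy identity for $u^K$, density via $L^p$-contractivity together with weak compactness and lower semicontinuity, and a flat $K$-symmetric example in $P$ for optimality. One small correction: strict monotonicity of $\mu_u$ only makes $u^K$ continuous, not locally Lipschitz. The Lipschitz bound you need comes from the Wulff inequality itself, via $-\mu_u'(t)\geq\|\nabla^\Sigma u\|_\infty^{-1}\cH^{n-1}(\{u=t\})\geq c\,\mu_u(t)^{(n-1)/n}$ with $c>0$ depending only on $\mathcal E$ and $\|\nabla^\Sigma u\|_\infty$; alternatively, bypass it as the paper does with the generalized-inverse representation of the symmetrized energy.
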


For $1<p<n$, put $p^*=np/(n-p)$ and
\[
 A_{n,p}=\pi^{-1/2}n^{-1/p}
 \left(\frac{p-1}{n-p}\right)^{1-1/p}
 \left(
  \frac{\Gamma(1+n/2)\Gamma(n)}
       {\Gamma(n/p)\Gamma(1+n-n/p)}
 \right)^{1/n}.
\]
Here $\Gamma$ denotes Euler's gamma function.  This is the sharp Euclidean
Sobolev constant of Aubin and Talenti
\cite{Aubin,Talenti}.

\begin{corollary}[Sharp anisotropic $L^p$-Sobolev inequality]
\label{anisotropic Lp Sobolev corollary}
Under the assumptions of Theorem \ref{Polya Szego theorem}, every
$u\in W_0^{1,p}(\Sigma)$, $1<p<n$, satisfies
\begin{equation}\label{anisotropic Lp Sobolev}
 \left(\int_\Sigma |u|^{p^*}\,\mathrm d\mu\right)^{1/p^*}
 \leq A_{n,p}
 \left(\frac{\omega_n}{V_*(\mathcal E)}\right)^{1/n}
 \left(\int_\Sigma
  h_{\mathcal E}(\nabla^\Sigma u)^p\,\mathrm d\mu\right)^{1/p}.
\end{equation}
The constant is optimal uniformly over the stated class.  In particular,
$\mathcal E=\B^{n+m}$ gives the sharp inequality
\[
 \|u\|_{L^{p^*}(\Sigma)}
 \leq A_{n,p}\|\nabla^\Sigma u\|_{L^p(\Sigma)}
\]
for minimal submanifolds of codimension at most four.
\end{corollary}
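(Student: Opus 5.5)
The plan is to derive the inequality from the P\'olya--Szeg\H{o} principle of Theorem \ref{Polya Szego theorem} and the sharp Euclidean anisotropic Sobolev inequality on the flat space $P$.

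\textbf{Reductions.} Since $h_{\mathcal E}(\nabla^\Sigma|u|)=h_{\mathcal E}(\pm\nabla^\Sigma u)=h_{\mathcal E}(\nabla^\Sigma u)$ almost everywhere, by central symmetry, we may replace $u$ by $|u|$ and assume $u\geq0$. Equimeasurability of $u$ and $u^K$ gives
\[
 \|u\|_{L^{p^*}(\Sigma)}=\|u^K\|_{L^{p^*}(P)} .
\]
By Theorem \ref{Polya Szego theorem}, $u^K\in W^{1,p}(P)$ with compact support, and
\[
 \int_P h_K(\nabla^Pu^K)^p\,\mathrm dz\leq\int_\Sigma h_{\mathcal E}(\nabla^\Sigma u)^p\,\mathrm d\mu .
\]
It therefore suffices to prove the flat anisotropic Sobolev inequality on $P\cong\R^n$ for the ellipsoid $K$, with constant $A_{n,p}(\omega_n/|K|)^{1/n}$.

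\textbf{The flat inequality.} Write $K=T\B^n$ with $T$ symmetric positive definite on $P$, so $|K|=\det T\,\omega_n$. Then $h_K(\zeta)=|T\zeta|$. Set $\tilde w(z)=w(Tz)$; then $\nabla\tilde w(z)=T\nabla w(Tz)$, so $|\nabla\tilde w(z)|=h_K(\nabla w(Tz))$. The change of variables $z\mapsto Tz$ gives
\[
 \|\tilde w\|_{L^{p^*}}=(\det T)^{-1/p^*}\|w\|_{L^{p^*}},
 \qquad
 \|\nabla\tilde w\|_{L^p}=(\det T)^{-1/p}\|h_K(\nabla w)\|_{L^p}.
\]
Applying the Aubin--Talenti inequality to $\tilde w$ and using $1/p-1/p^*=1/n$ yields
\[
 \|w\|_{L^{p^*}}\leq A_{n,p}(\det T)^{-1/n}\|h_K(\nabla w)\|_{L^p}
 =A_{n,p}\bigl(\omega_n/|K|\bigr)^{1/n}\|h_K(\nabla w)\|_{L^p}.
\]
Taking $w=u^K$ and recalling $|K|=V_*(\mathcal E)$ gives \eqref{anisotropic Lp Sobolev}.

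\textbf{Optimality.} This step needs the most care. Take $\Sigma=\overline{RK}\subset P$, a flat minimal embedding realizing $V_*(\mathcal E)$, and use $h_{\mathcal E}|_P=h_K$. On this $\Sigma$ the inequality is exactly the flat anisotropic inequality restricted to functions supported in $RK$. The Talenti extremals composed with $T^{-1}$, truncated and cut off at radius $R\to\infty$, or equivalently rescaled by concentration inside a fixed $K$, approach equality in the flat inequality. Since the constant is the same for every element of the class, no smaller constant can work uniformly.

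For the Euclidean case $\mathcal E=\B^{n+m}$, we have $V_*=\omega_n$ and $h_{\mathcal E}=|\cdot|$, which gives the final displayed inequality.
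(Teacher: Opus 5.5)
Your proposal is correct and follows the paper's proof essentially step for step. You reduce to $|u|$ by central symmetry, then combine equimeasurability and Theorem~\ref{Polya Szego theorem} with the Aubin--Talenti inequality transported to $K$ by a linear change of variables. Taking $T$ symmetric instead of the paper's general invertible $A$ with $h_K(\xi)=|A^{\mathsf T}\xi|$ is only cosmetic. Optimality then comes from cut-off Talenti extremals on flat domains in a minimizing plane, as in the paper.
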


\begin{proof}[Proof of Theorem \ref{Polya Szego theorem}]
We first take $u\geq0$ smooth with zero boundary values.  For a regular value
$t>0$, set
\[
 \Omega_t=\{u>t\},
 \qquad \nu_t=-\frac{\nabla^\Sigma u}{|\nabla^\Sigma u|}.
\]
Because $u=0$ on $\partial\Sigma$ and $t>0$, one has
$\overline{\Omega_t}\Subset\operatorname{int}\Sigma$.
For every nonempty regular superlevel set,
$\overline{\Omega_t}$ is a compact smooth manifold with boundary $\{u=t\}$,
and
$X|_{\overline{\Omega_t}}:\overline{\Omega_t}\longrightarrow\R^{n+m}$
is a compact minimal immersion.  Since $\mathcal E$ is centrally symmetric,
$h_{\mathcal E}(-z)=h_{\mathcal E}(z)$.  Corollary
\ref{Wulff corollary}, applied to $\overline{\Omega_t}$, therefore gives
\begin{equation}\label{level set Wulff}
 I(t):=\int_{\{u=t\}}h_{\mathcal E}(\nu_t)\,\mathrm d\sigma
 \geq n|K|^{1/n}\mu_u(t)^{\frac{n-1}{n}}.
\end{equation}
For almost every $t$, the coarea formula \cite[Section~3.2]{Federer} yields
\[
 -\mu_u'(t)=
 \int_{\{u=t\}}\frac{1}{|\nabla^\Sigma u|}\,\mathrm d\sigma.
\]
H\"older's inequality on the level hypersurface gives
\[
 I(t)^p
 \leq
 \left(\int_{\{u=t\}}
  h_{\mathcal E}(\nu_t)^p|\nabla^\Sigma u|^{p-1}
  \,\mathrm d\sigma\right)\bigl(-\mu_u'(t)\bigr)^{p-1}.
\]
A second application of coarea, followed by \eqref{level set Wulff}, gives
\begin{align}
 \int_\Sigma h_{\mathcal E}(\nabla^\Sigma u)^p\,\mathrm d\mu
 &=\int_0^\infty\int_{\{u=t\}}
   h_{\mathcal E}(\nu_t)^p|\nabla^\Sigma u|^{p-1}
 \,\mathrm d\sigma\,\mathrm dt \notag\\
 &\geq \int_0^\infty
   \frac{I(t)^p}{(-\mu_u'(t))^{p-1}}\,\mathrm dt \notag\\
 &\geq \int_0^\infty
   \frac{\bigl(n|K|^{1/n}\mu_u(t)^{(n-1)/n}\bigr)^p}
        {(-\mu_u'(t))^{p-1}}\,\mathrm dt.       \label{one dimensional energy}
\end{align}
The quotient in \eqref{one dimensional energy} is understood as zero when
$\mu_u(t)=0$ and as $+\infty$ when $\mu_u(t)>0$ but $-\mu_u'(t)=0$.

For the $K$-symmetrization, the standard one-dimensional representation
\cite{AlvinoFeroneLionsTrombetti,EspositoTrombetti} gives
\begin{equation}\label{symmetrized energy formula}
 \int_P h_K(\nabla^P u^K)^p\,\mathrm dz
 =\int_0^\infty
   \frac{\bigl(n|K|^{1/n}\mu_u(t)^{(n-1)/n}\bigr)^p}
        {(-\mu_u'(t))^{p-1}}\,\mathrm dt.
\end{equation}
This formula uses the generalized inverse of $\mu_u$ and follows by monotone
approximation.  Its geometric terms can be seen as follows.  Let
$\rho_K(z)=\inf\{r>0:z\in rK\}$ be the Minkowski functional of $K$.
Since $K$ is a smooth strictly convex ellipsoid,
\begin{equation}\label{gauge support identity}
 h_K(\nabla^P\rho_K)=1
 \qquad\text{on }P\setminus\{0\}.
\end{equation}
The function $u^K$ is nonincreasing in $\rho_K$ and has distribution function
$\mu_u$.  The anisotropic perimeter of the level set $r_u(t)K$ is
$n|K|^{1/n}\mu_u(t)^{(n-1)/n}$.
Central symmetry of $K$ and \eqref{gauge support identity} show that
$h_K(\nabla^P u^K)$ depends only on $\rho_K$ and is therefore constant on
almost every regular level set.  Thus the
level-set H\"older inequality is an equality for $u^K$.  Combining
\eqref{one dimensional energy} and \eqref{symmetrized energy formula} proves
\eqref{anisotropic Polya Szego} for smooth $u$.

For general $u\in W_0^{1,p}(\Sigma)$, choose nonnegative
$u_j\in C_c^\infty(\operatorname{int}\Sigma)$ with $u_j\to u$ in $W^{1,p}$.
The $L^p$ contraction property gives $u_j^K\to u^K$ in $L^p(P)$.
The smooth inequality gives a uniform bound for
$h_K(\nabla^P u_j^K)$ in $L^p(P)$.  Since $h_K$ is equivalent to the Euclidean
norm and $1<p<\infty$, weak compactness in $W^{1,p}(P)$ and the strong
$L^p$ convergence imply that $u^K\in W^{1,p}(P)$.  Lower semicontinuity,
the Lipschitz continuity of $h_{\mathcal E}$, and the convergence of $u_j$ in
$W^{1,p}(\Sigma)$ give
\eqref{anisotropic Polya Szego}.

For optimality, take a flat domain in $P$ containing the support of a smooth
function that is decreasing in $\rho_K$.  On $P$ one has
$h_{\mathcal E}=h_K$, and the function agrees with its $K$-symmetrization.
Thus equality is attained in \eqref{anisotropic Polya Szego}, so the factor
one is optimal.
\end{proof}

\begin{proof}[Proof of Corollary \ref{anisotropic Lp Sobolev corollary}]
Apply the preceding theorem to $|u|$.  Since $\mathcal E$ is centrally
symmetric,
$h_{\mathcal E}(\nabla^\Sigma|u|)=h_{\mathcal E}(\nabla^\Sigma u)$ almost
everywhere.  Identify $P$ with
$\R^n$ and write $K=A\B^n$ for an invertible linear map $A$.  Then
\[
 h_K(\xi)=|A^{\mathsf T}\xi|,
 \qquad |\det A|=\frac{|K|}{\omega_n}.
\]
The change of variables $z=Ay$ in the sharp Euclidean inequality of Aubin and
Talenti gives, for every $v\in W^{1,p}(P)$,
\begin{equation}\label{anisotropic Sobolev on plane}
 \|v\|_{L^{p^*}(P)}
 \leq A_{n,p}\left(\frac{\omega_n}{|K|}\right)^{1/n}
 \left(\int_P h_K(\nabla^P v)^p\,\mathrm dz\right)^{1/p}.
\end{equation}
Equimeasurability, \eqref{anisotropic Sobolev on plane}, and Theorem
\ref{Polya Szego theorem} prove \eqref{anisotropic Lp Sobolev}.  The constant
is optimal by taking cut-off Aubin--Talenti extremals on expanding flat
domains in a minimizing plane $P$.
\end{proof}

\end{document}